\documentclass[a4paper,12pt]{article}
\usepackage{authblk}
\usepackage{amsmath}
\usepackage{amsfonts}
\usepackage{amssymb}
\usepackage{amsthm}
\usepackage{graphicx}
\usepackage{appendix}
\usepackage{tabularx}
\usepackage{diagbox}
\usepackage{lscape}

\newcolumntype{L}[1]{>{\raggedright\arraybackslash}p{#1}} 	
\newcolumntype{C}[1]{>{\centering\arraybackslash}p{#1}} 	
\newcolumntype{R}[1]{>{\raggedleft\arraybackslash}p{#1}} 	

\setcounter{MaxMatrixCols}{30}
\textwidth 16cm \oddsidemargin 5mm \evensidemargin 5mm
\DeclareMathOperator{\tr}{tr}

\newcommand{\R}{\mathbb{R}}

\newcommand{\id}{1\!\!1}

\newtheorem{theorem}{Theorem}

\newtheorem{remark}[theorem]{Remark}

\theoremstyle{remark}

\parindent0pt

\begin{document}
\title{Existence theorem for geometrically nonlinear Cosserat micropolar model under uniform convexity requirements}
\author{Patrizio Neff\,%
\thanks{Patrizio Neff, Head of Lehrstuhl f\"ur Nichtlineare Analysis und Modellierung, Fakult\"at f\"ur  Mathematik, Universit\"at Duisburg-Essen, Campus Essen, Thea-Leymann Str. 9, 45127 Essen, Germany, email: patrizio.neff@uni-due.de, Tel.: +49-201-183-4243}\,\,,\,
 Mircea B\^{\i}rsan%
\thanks{\,Mircea B\^{\i}rsan, Lehrstuhl f\"ur Nichtlineare Analysis und Modellierung, Fakult\"at f\"ur  Mathematik, Universit\"at Duisburg-Essen, Campus Essen, Thea-Leymann Str. 9, 45127 Essen, Germany, email: mircea.birsan@uni-due.de ; and
Department of Mathematics, University ``A.I. Cuza'' of Ia\c{s}i, 700506 Ia\c{s}i,  Romania}\,\,\,
and\,\,\,Frank Osterbrink\,%
\thanks{\,Frank Osterbrink, Lehrstuhl f\"ur Nichtlineare Analysis und Modellierung, Fakult\"at f\"ur  Mathematik, Universit\"at Duisburg-Essen, Campus Essen, Thea-Leymann Str. 9, 45127 Essen, Germany, email: frank.osterbrink@uni-due.de}
}


\maketitle

\begin{abstract}
We reconsider the geometrically nonlinear Cosserat model for a uniformly convex elastic energy and write the equilibrium system as a minimization problem. Applying the direct methods of the calculus of variations we show the existence of minimizers. We present a clear proof based on the coercivity of the elastically stored energy density and on the weak lower semi-continuity  of the total energy functional. Use is made of the dislocation density tensor $\overline{\boldsymbol{K}}=\overline{\boldsymbol{R}}^T\,\mathrm{Curl}\,\overline{\boldsymbol{R}}$ as a suitable Cosserat curvature measure.
\end{abstract}

\textbf{Keywords:} Cosserat continuum, geometricallynonlinear micropolar elasticity, existence theorem, \newline minimizers.

\section{Introduction}\label{sec1}

The Cosserat model for elastic continua is a generalized model of elastic bodies which is appropriate for deformable solids with a certain microstructure, such as defective elastic crystals, cellular materials, foam-like structures (e.g., bones), granular solids, periodic lattices and others. In the Cosserat continuum (also called \emph{micropolar} continuum) any material point is allowed to rotate without stretch, like an infinitesimal rigid body. Thus, in addition to the deformation field $\,\boldsymbol{\varphi}\,$, an independent rotation field $ \,\overline{\boldsymbol{R}}\,$ (also called \emph{microrotation}) is needed to fully describe the generalized continuum. The Cosserat model has therefore 6 degrees of freedomm (3 for the displacements and 3 for the rotations of points), in contrast to the classical elastic continuum, which has only 3 degrees of freedom.

From a historical perspective, the kinematical model of such generalized continua was introduced by the Cosserat brothers in \cite{Cosserat09}, but they did not provide any constitutive relations. Fifty years later, Ericksen and Truesdell \cite{Ericksen-Truesdell-58} have developed this idea and have drawn anew the attention to the theory of generalized continua. In the 1960s, several variants of the theory of media with microstructure were proposed by Toupin \cite{Toupin64}, Mindlin \cite{Mindlin64}, Eringen \cite{Eringen67} and others. Thus, the micropolar (or Cosserat) continua can be viewed as a special case of the microstretch continua and of the micromorphic continua (see Eringen \cite{Eringen98} for the foundation of this theory). In the last decades, the generalized theories of continua have proved useful for the treatment of complex mechanical problems, for which the classical theory of elasticity is  not satisfactory. In this respect, we mention for instance the works of Lakes \cite{Lakes93,Lakes93b,Lakes98}, Neff \cite{Neff_Cosserat_plasticity05}, Neff and Forest \cite{Neff_Forest07}, among others.

In parallel to the three-dimensional theory, the Cosserat model has contributed substantially to the development of dimensionally-reduced theories for shells, plates and rods. Ever since the beginning, the Cosserat brothers have indicated the advantages of their approach for the modeling of shells and rods \cite{Cosserat09}. This idea was used later to develop the so-called Cosserat theories for shells  and rods, see e.g., the works of  Ericksen and Truesdell \cite{Ericksen-Truesdell-58}, Eringen \cite{Eringen67}, Green and Naghdi \cite{Naghdi72}, Rubin \cite{Rubin00}, Zhilin \cite{Zhilin76}, Neff \cite{Neff_plate04_cmt,Neff_plate07_m3as}. We mention here that the general nonlinear theory of shells (derived independently from three-dimensional classical elasticity) is a 6-parameter shell theory, which kinematical model is equivalent to the Cosserat surface model. In this respect, see the books \cite{Libai98,Pietraszkiewicz-book04,Zhilin06} and the papers of Eremeyev and Pietraszkiewicz \cite{Pietraszkiewicz04,Eremeyev06} and B\^ irsan and Neff \cite{Birsan-Neff-JElast-2013,Birsan-Neff-MMS-2014}.

The mathematical problem related to the deformation of Cosserat elastic bodies has been investigated in many works. In linearized micropolar elasticity the existence and properties of solutions have been studied by Ie\c san \cite{Iesan70,Iesan71,Iesan82} and by Neff \cite{Neff_zamm06,Neff_JeongMMS08} under some weaker assumptions, among others. For the geometrically nonlinear micromorphic model, the first existence theorem based on convexity arguments was presented by Neff \cite{Neff_Edinb06,Neff-PAMM-2004,Neff_Habil04}. Another existence result for generalized continua with microstructure was proved in \cite{Mariano08}, under certain convexity assumptions. Adapting the methods from \cite{Ball77,Dacorogna89}, Tamba\v ca and Vel\v ci\'c \cite{Tambaca10} have derived an existence theorem for nonlinear micropolar elasticity, for a general constitutive behavior.

In our paper, we investigate the equilibrium problem for nonlinear Cosserat continua, in its minimization formulation. We prove the existence of global minimizers by applying the direct methods of the calculus of variations. Since the Cosserat model is a special case of the general micromorphic model, the theorem presented herein can be derived from the results in \cite{Neff_Edinb06}. Moreover, the first existence theorem for the geometrically nonlinear Cosserat model has been obtained in the first authors habilitation thesis already in 2003 \cite{Neff_Habil04,Neff-PAMM-2004}. However, these results seem to have gone mostly unnoticed. Therefore we find it expedient to discuss here the Cosserat case individually. In the micropolar case, we prove the existence of solutions for all exponents $\,p\geq 2$ in the curvature energy. Thus, we do not exclude here the important case of quadratic stored energy functions. On the other hand, the technique used in \cite{Tambaca10} has the drawback that the condition $\,p>3\,$ on the space $\,W^{1,p}(\Omega)$ where we look for solutions, is imposed. Thus, our theorem (which holds for $\,p\geq 2$) cannot be obtained as a consequence of the results in \cite{Tambaca10}.

Moreover, in the present paper we consider a different form of the curvature tensor, namely the dislocation density tensor $\overline{\boldsymbol{K}}= \,\overline{\boldsymbol{R}}^T\mathrm{Curl}\,\overline{\boldsymbol{R}}\,$, which seems to be more appropriate for Cosserat continua for three reasons: firstly, we obtain immediately a second order curvature tensor without artificial construction, secondly we have the natural connection to dislocation theory and thirdly the operator $\overline{\boldsymbol{K}}$ is the correct object for the group of rotations $\mathrm{SO}(3)$ (see \cite{Neff_curl08})

In Section \ref{sec2} we introduce the strain measures and curvature strain measures associated to the nonlinear Cosserat model. The next section presents the expression of the elastically stored energy density and the main assumptions on the constitutive coefficients.

The main part of the paper (Section \ref{sec4}) is devoted to the existence result. We present a clear and detailed proof of the theorem, based on convexity arguments. The coercivity of the stored energy density and the sequential weak lower semi-continuity of the energy functional play also important roles in the proof. We extend our investigation to chiral materials (Section \ref{sec5}). In the appendix we provide additional relations between the different curvature tensors and give a table summarizing our findings.

\section{Strain and curvature measures for the nonlinear Cosserat model}\label{sec2}

We consider an elastic body $\,\mathcal{B}\,$ which occupies in the reference configuration a bounded domain $\,\Omega\in\mathbb{R}^3$ with Lipschitz boundary $\partial\Omega$. Let $ Ox_1x_2x_3$ be a Cartesian coordinate frame in $\mathbb{R}^3$ and $\boldsymbol{e}_i$ the unit vectors along the coordinate axes $Ox_i\,$. We employ the usual conventions: boldface letters denote vectors and tensors; The Latin indices $i,j,k,...$ range over the set $\{1,2,3\}$; the comma preceding an index $i$ denotes partial derivatives with respect to $x_i$ (e.g., $f,_i=\frac{\partial f}{\partial x_i}\,$); the Einstein summation convention over repeated indices is also used.

We employ a \emph{Cosserat model} for the elastic body $\,\mathcal{B}\,$ under consideration. Such an elastic continuum is also called \emph{micropolar}. This model takes into consideration not only the displacements of material points, but also the rigid rotations of material points, which are often called \emph{microrotations}. Thus, every material point has 6 degrees of freedom (3 for translations and 3 for rotations).

Consider that the elastic body deforms under the action of some external loads and it reaches an equilibrium state. We denote the (macroscopic) \emph{deformation vector} with
$$\boldsymbol{\varphi}=\boldsymbol{\varphi}(x_1,x_2,x_3),\qquad \boldsymbol{\varphi}:\Omega\rightarrow\mathbb{R}^3$$
and the \emph{microrotation tensor} (describing the rotation of each material point) with
$$\overline{\boldsymbol{R}}=\overline{\boldsymbol{R}}(x_1,x_2,x_3),\qquad \overline{\boldsymbol{R}}:\Omega\rightarrow\mathrm{SO(3)}.$$
By abuse of notation, we identify the second order tensor $\,\overline{\boldsymbol{R}}\,$ with the $3\times 3$ matrix of its components $\,\overline{\boldsymbol{R}}\in \mathbb{R}^{3\times 3}$ and the vector $\,\boldsymbol{\varphi}\,$ with the column-vector of its components $\,\boldsymbol{\varphi}\in\mathbb{R}^3$. Notice that $\,\overline{\boldsymbol{R}}\,$ is a proper orthogonal tensor, i.e. $\,\overline{\boldsymbol{R}}\in\mathrm{SO(3)}$. The three columns of the matrix $\,\overline{\boldsymbol{R}}\,$ will be denoted with $\,\boldsymbol{d}_1\,,\,\boldsymbol{d}_2\,,\,\boldsymbol{d}_3\,$. They are usually called the \emph{directors} and can be interpreted as an orthonormal triad of vectors $\{\boldsymbol{d}_i\}$ rigidly attached to each material point, describing thus the microrotations. Using either the direct tensor notation or the matrix notation, we can write respectively
$$\overline{\boldsymbol{R}}=\boldsymbol{d}_i\otimes \boldsymbol{e}_i\qquad\mathrm{or}\qquad
\overline{\boldsymbol{R}}=
\big(\boldsymbol{d}_1\,|\,\boldsymbol{d}_2\,|\,\boldsymbol{d}_3\,\big)_{3\times 3}\,\,.$$
In the reference configuration $\Omega$ the directors attached to every material point $(x_1,x_2,x_3)$ are taken to be $\{\, \boldsymbol{e}_i\,\}$ and after deformation they become $\{\boldsymbol{d}_i(x_1,x_2,x_3)\}$. If we denote by $\,\boldsymbol{\varphi}_0(x_1,x_2,x_3):=x_i\,\boldsymbol{e}_i$ the position vector of points in the reference configuration $\Omega$, then the \emph{displacement vector}  field is $ \boldsymbol{u}=\boldsymbol{\varphi}-\boldsymbol{\varphi}_0\,$.

To introduce the nonlinear strain measures we consider first the so called deformation gradient
\[\boldsymbol{F}=\,\mathrm{Grad}\,\boldsymbol{\varphi}= \boldsymbol{\varphi},_i \otimes \, \boldsymbol{e}_i = \big(\,\boldsymbol{\varphi},_1\,| \, \,\boldsymbol{\varphi},_2\,|\,\,\boldsymbol{\varphi},_3\,\big)_{3\times 3}\,.\]
This motivates to consider the nonsymmetric right stretch tensor (\textbf{the first Cosserat deformation tensor}, see the Cosserat's book \cite{Cosserat09}, p. 123, eq. (43))
\[\overline{\boldsymbol{U}}=\overline{\boldsymbol{R}}^T\boldsymbol{F}=\, (\boldsymbol{e}_i \otimes \boldsymbol{d}_i)(\boldsymbol{\varphi},_j \otimes \boldsymbol{e}_j)= \langle\,\boldsymbol{d}_i\,,\boldsymbol{\varphi},_j\rangle\, \boldsymbol{e}_i \otimes \boldsymbol{e}_j\,.\]
(Remark: we write the above tensors $\overline{\boldsymbol{R}}$ and $\overline{\boldsymbol{U}}$ with superposed  bars in order to distinguish them from the factors  $ {\boldsymbol{R}}$ and $ {\boldsymbol{U}}$ of the classical polar decomposition $\boldsymbol{F}=\boldsymbol{R}\,\boldsymbol{U}$, in which $\boldsymbol{R}$ is orthogonal and $\boldsymbol{U}$ is positive definite, symmetric and which is a standard notation in elasticity.) 
Then, we define the relative Lagrangian \emph{strain measure for stretch} by
\begin{equation}\label{e1}
    \overline{\boldsymbol{E}}\,=\,\overline{\boldsymbol{U}}- \id_3\,=\big(\,\langle\,\boldsymbol{d}_i\,,\boldsymbol{\varphi},_j\rangle-\delta_{ij}\,\big)\, \boldsymbol{e}_i \otimes \boldsymbol{e}_j\,,
\end{equation}
where $\,\delta_{ij} $ is the Kronecker symbol and $\,\id_3:=\boldsymbol{e}_i \otimes \boldsymbol{e}_i$ is the unit tensor in the 3-space.
\begin{remark}
Notice that there are many possibilities to define strain measures for the nonlinear micropolar continuum. For a comparative review of various such definitions we refer to the paper \cite{Pietraszkiewicz09}. The tensor introduced in \eqref{e1} is denoted with $\,\boldsymbol{E}\,$ in  \cite{Pietraszkiewicz09}, but we write it  with a superposed bar $\,\overline{\boldsymbol{E}}\, $ in order to distinguish it from the classical Green-Lagrangian strain tensor $\,\boldsymbol{E}\,=\,\frac{1}{2}\,(\boldsymbol{F}^T\boldsymbol{F}- \id_3)\,$ in three-dimensional finite elasticity.
\end{remark}

For the Lagrangian strain measure for orientation change (curvature) we introduce the \emph{third order curvature tensor}
\begin{equation}\label{e2}
\begin{array}{rcl}
    \boldsymbol{\mathfrak{K}} &\,=\, &\overline{\boldsymbol{R}}^T\,\mathrm{Grad}\, \overline{\boldsymbol{R}}\,=\, \overline{\boldsymbol{R}}^T( \boldsymbol{\overline{\boldsymbol{R}} },_k \otimes \, \boldsymbol{e}_k) \,=\,\big(\overline{\boldsymbol{R}}^T \boldsymbol{\overline{\boldsymbol{R}} },_k \big)\otimes \, \boldsymbol{e}_k \vspace{4pt}\\
     &\,=\, & (\boldsymbol{e}_i \otimes \boldsymbol{d}_i) \big(\boldsymbol{d}_{j,k} \otimes \boldsymbol{e}_j \big)\otimes \, \boldsymbol{e}_k\, =\,
      \langle\,\boldsymbol{d}_i\,,\boldsymbol{d}_{j,k}\,\rangle \, \boldsymbol{e}_i \otimes \boldsymbol{e}_j \otimes\,  \boldsymbol{e}_k\,\,,
     \end{array}
\end{equation}
or in matrix notation
$$ \boldsymbol{\mathfrak{K}} \,=\,  \big(\, \overline{\boldsymbol{R}}^T\overline{\boldsymbol{R}},_1\,| \,\, \overline{\boldsymbol{R}}^T \overline{\boldsymbol{R}},_2\,|\,\,\overline{\boldsymbol{R}}^T \overline{\boldsymbol{R}},_3\,\big)\in \mathbb{R}^{3\times 3\times 3}\,\,.$$
We observe that this tensor corresponds to the \textbf{second Cosserat deformation tensor}, see the Cosserat's book \cite{Cosserat09}, p. 123, eq. (44). Although $\, \boldsymbol{\mathfrak{K}} \,$ is a third order tensor, it has, in fact, only 9 independent components, since $\,\overline{\boldsymbol{R}}^T\overline{\boldsymbol{R}},_k\,$ is skew-symmetric ($k=1,2,3$).
\begin{remark}
With a view towards defining an energy density depending only on the deformation gradient $\boldsymbol{F}$, the microrotation $\overline{\boldsymbol{R}}$ and the derivatives of microrotation $\mathrm{Grad}\,\overline{\boldsymbol{R}}\,\in\R^{3\times 3\times 3}$ we notice that every energy density of the form $W=W\left(\boldsymbol{F},\overline{\boldsymbol{R}},\mathrm{Grad}\,\overline{\boldsymbol{R}}\,\right)$ can be written in the form $W=W\left(\overline{\boldsymbol{U}},\overline{\boldsymbol{K}}\right)$ since by the principle of frame-invariance the energy density has to be left invariant under rigid rotations
\[W(\boldsymbol{Q}\boldsymbol{F},\boldsymbol{Q}\overline{\boldsymbol{R}},\boldsymbol{Q}\,\mathrm{Grad}\,\overline{\boldsymbol{R}})=W(\boldsymbol{F},\overline{\boldsymbol{R}},\mathrm{Grad}\,\overline{\boldsymbol{R}})\qquad\forall\;{\boldsymbol{Q}\in\mathrm{SO}(3)}.\]
and with $\boldsymbol{Q}=\overline{\boldsymbol{R}}^T$ we get (cf. Cosserat's book \cite{Cosserat09}, p. 127)
\[\,W(\boldsymbol{F},\overline{\boldsymbol{R}},\mathrm{Grad}\,\overline{\boldsymbol{R}})=W(\overline{\boldsymbol{R}}^T\boldsymbol{F},\overline{\boldsymbol{R}}^T\overline{\boldsymbol{R}},\overline{\boldsymbol{R}}^T\mathrm{Grad}\,\overline{\boldsymbol{R}})=W(\overline{\boldsymbol{R}}^T\boldsymbol{F},\overline{\boldsymbol{R}}^T\mathrm{Grad}\,\overline{\boldsymbol{R}})=W\left(\overline{\boldsymbol{U}},\boldsymbol{\mathfrak{K}}\right)\,.\]
\end{remark}
If we take the transpose in the last two components of $\,\boldsymbol{\mathfrak{K}} $, then we obtain the third order curvature tensor $\,\boldsymbol{\widetilde{\mathfrak{K}}} \,$ which was used in \cite{Neff_plate04_cmt,Neff_Edinb06,Neff_Forest07}:
\begin{equation}\label{e3}
\begin{array}{rcl}
    \boldsymbol{\widetilde{\mathfrak{K}}} & \,=\, & \boldsymbol{\mathfrak{K}}^{\stackrel{2.3}{T}} \,:=\, \langle\,\boldsymbol{d}_i\,,\boldsymbol{d}_{k,j}\,\rangle \, \boldsymbol{e}_i \otimes \boldsymbol{e}_j \otimes\,  \boldsymbol{e}_k\,=\, (\boldsymbol{e}_i \otimes \boldsymbol{d}_i) \big(\boldsymbol{d}_{k,j} \otimes \boldsymbol{e}_j \big)\otimes \, \boldsymbol{e}_k \vspace{4pt}\\
    & \, =\, &   \overline{\boldsymbol{R}}^T\big(\, \mathrm{Grad}\, \boldsymbol{d }_k \big) \otimes \, \boldsymbol{e}_k \,=\, \big(\, \overline{\boldsymbol{R}}^T\mathrm{Grad}\, \boldsymbol{d }_1\,| \,\, \overline{\boldsymbol{R}}^T \mathrm{Grad}\, \boldsymbol{d }_2\,|\,\,\overline{\boldsymbol{R}}^T \mathrm{Grad}\, \boldsymbol{d }_3\,\big)\,.
 \end{array}
 \end{equation}
In order to avoid working with a third order tensor for the curvature, one can replace $\,\boldsymbol{\mathfrak{K}}\, $ by a second order curvature strain tensor. This can be done in several ways. Indeed, a first way is to consider the axial vector of the skew-symmetric factor $\,\overline{\boldsymbol{R}}^T \boldsymbol{\overline{\boldsymbol{R}} },_k\,$ in the definition \eqref{e2} of $\,\boldsymbol{\mathfrak{K}}\, $ and to introduce thus the second order tensor
\begin{equation}\label{e4}
     \boldsymbol{\Gamma}\,=\, \mathrm{axl}\big(\overline{\boldsymbol{R}}^T \boldsymbol{\overline{\boldsymbol{R}} },_k \big)\otimes \, \boldsymbol{e}_k\,\,.
 \end{equation}
The tensor $\,\boldsymbol{\Gamma}\,$ is a Lagrangian measure for curvature and it is frequently called in the literature the \emph{wryness tensor} (see e.g., \cite{Pietraszkiewicz09}). The relation between $\,\boldsymbol{\Gamma}\,$  and $\,\boldsymbol{\mathfrak{K}}\, $ can be expressed with the help of the third order alternator  tensor
 $$\boldsymbol{\epsilon}=-\id_3\times \id_3=\epsilon_{ijk}\,\boldsymbol{e}_i\otimes\boldsymbol{e}_j\otimes \boldsymbol{e}_k$$
in the form (since $\,\mathrm{axl}\,\boldsymbol{S}=-\frac{1}{2}\,\,\boldsymbol{\epsilon}:\boldsymbol{S}\,$, for any skew-symmetric second order tensor $\boldsymbol{S}$)
\begin{equation}\label{e5}
    \boldsymbol{\Gamma}\,=\,-\,\dfrac{1}{2}\,\,\boldsymbol{\epsilon}:\boldsymbol{\mathfrak{K}} \quad\qquad\mathrm{and}\qquad\quad \boldsymbol{\mathfrak{K}}\,=\, \id_3\times \boldsymbol{\Gamma}\,=\, -\, \boldsymbol{\epsilon}\,\boldsymbol{\Gamma}\,.
\end{equation}
Here, the double dot product `` : '' of two third order tensors $\boldsymbol{A}=A_{ijk}\,\boldsymbol{e}_i\otimes\boldsymbol{e}_j\otimes \boldsymbol{e}_k$ and $\boldsymbol{B}=B_{ijk}\,\boldsymbol{e}_i\otimes\boldsymbol{e}_j\otimes \boldsymbol{e}_k$
is defined as $\,\boldsymbol{A}:\boldsymbol{B}\,=\, A_{irs}B_{rsj}\,\boldsymbol{e}_i\otimes\boldsymbol{e}_j\,$. The cross product ``$\,\times\,$'' of two second order tensors is calculated with help of the rule $\,(\boldsymbol{a}\otimes\boldsymbol{b})\times(\boldsymbol{c}\otimes\boldsymbol{d})= \boldsymbol{a}\otimes(\boldsymbol{b}\times\boldsymbol{c})\otimes\boldsymbol{d}\,$, which holds for any vectors $\boldsymbol{a},\boldsymbol{b},\boldsymbol{c}$ and $\boldsymbol{d}$.

The wryness tensor $\,\boldsymbol{\Gamma}\,$ can be also expressed by means of the directors $\,\boldsymbol{d}_i$ in the form \cite{Tambaca10}
$$\boldsymbol{\Gamma}\,=\, \dfrac{1}{2}\,\,\overline{\boldsymbol{R}}^T\big( \boldsymbol{d}_j\times\boldsymbol{d}_{j,i}  \big) \otimes \boldsymbol{e}_i = \, \dfrac{1}{2}\,\,\epsilon_{iks}\, \langle \boldsymbol{d}_{k,j},\boldsymbol{d}_{s}  \rangle \,\boldsymbol{e}_i\otimes \boldsymbol{e}_j\,\,.$$
For the norm of  $\,\boldsymbol{\Gamma}\,$  we find (since $\|\mathrm{axl}\,\boldsymbol{S}\|^2=\frac{1}{2}\,\,\|\boldsymbol{S}\|^2\,$ for any skew-symmetric  tensor $\boldsymbol{S}$)
\begin{equation}\label{e6}
\begin{array}{rcl}
    \|\,\boldsymbol{\Gamma}\,\|^2 & \,=\, & \displaystyle \sum_{i=1}^3 \| \, \mathrm{axl}\big(\overline{\boldsymbol{R}}^T \boldsymbol{\overline{\boldsymbol{R}} },_i )\, \|^2\,=\, \dfrac{1}{2}\,  \sum_{i=1}^3 \| \, \overline{\boldsymbol{R}}^T \boldsymbol{\overline{\boldsymbol{R}} },_i \|^2\,=\,  \dfrac{1}{2}\,  \|\,
     \boldsymbol{\mathfrak{K}}\,\|^2  \vspace{4pt}\\
    & \, =\, & \dfrac{1}{2}\,  \displaystyle \sum_{i=1}^3 \|\, \overline{\boldsymbol{R}},_i \|^2\,=\,  \dfrac{1}{2}\,  \|  \,  \mathrm{Grad}\, \overline{\boldsymbol{R}}\,\|^2\,\,.
 \end{array}
 \end{equation}

As an alternative to the wryness tensor $\,\boldsymbol{\Gamma}\,$ given by \eqref{e4}, one can proceed as follows: to obtain a second order tensor as curvature measure, one makes use of the $\,\mathrm{Curl}\,$ operator instead of $\,\mathrm{Grad}\,$ in the definition \eqref{e2} of $\,\boldsymbol{\mathfrak{K}}\,$. Thus, we define the \emph{dislocation density tensor} $\,\overline{\boldsymbol{K}}\,$ by
\begin{equation}\label{e7}
    \overline{\boldsymbol{K}}\,=\, \overline{\boldsymbol{R}}^T\,\mathrm{Curl}\, \overline{\boldsymbol{R}}\,=\, -\,\overline{\boldsymbol{R}}^T\big( \boldsymbol{\overline{\boldsymbol{R}} },_i \times \, \boldsymbol{e}_i\big) \,\,.
\end{equation}
Here, the $\,\mathrm{curl}\,$ operator for vector fields $\,\boldsymbol{v}=v_i\, \boldsymbol{e}_i$ has the well-known expression
$$\mathrm{curl}\,\boldsymbol{v}= \epsilon_{ijk}\,v_{j,i}\,\boldsymbol{e}_k= -\boldsymbol{v},_i \times \boldsymbol{e}_i\,\,,$$
while the $\,\mathrm{Curl}\,$ operator for tensor fields $\,\boldsymbol{T}=T_{ij}\,\boldsymbol{e}_i\otimes \boldsymbol{e}_j\,$ is defined as
\begin{equation}\label{e8}
\begin{array}{rcl}
    \mathrm{Curl}\,\boldsymbol{T}  & \,=\, & \epsilon_{jrs}\,T_{is,r}\,\boldsymbol{e}_i\otimes \boldsymbol{e}_j\,=\,  -\,\boldsymbol{T},_i \times\, \boldsymbol{e}_i\,\,,
\end{array}
\end{equation}
or equivalently (since $(\boldsymbol{a}\otimes \boldsymbol{b})\times \boldsymbol{c}=\boldsymbol{a}\otimes (\boldsymbol{b}\times \boldsymbol{c})$)
\begin{equation}
\begin{array}{rcl}
    \mathrm{Curl}\,\boldsymbol{T} & \,=\, & \boldsymbol{e}_i \otimes  \mathrm{curl}\big(\boldsymbol{T}_i\big)\qquad\mathrm{for}\quad \boldsymbol{T}= \boldsymbol{e}_i \otimes  \boldsymbol{T}_i\,\,,
 \end{array}
 \end{equation}
where $\, \boldsymbol{T}_i=T_{ij}\,\boldsymbol{e}_j$ are the 3 rows of the $3\times 3$ matrix $\,\boldsymbol{T}\,$.
In other words, $\,\mathrm{Curl}\,$ is defined row wise as in \cite{Mielke06,Svendsen02}: the rows of the $3\times 3$ matrix $\,\mathrm{Curl}\,\boldsymbol{T}\,$ are respectively the 3 vectors $\,\mathrm{curl}\,\boldsymbol{T}_i\,$ ($i=1,2,3$). Note that some other authors define $\,\mathrm{Curl}\,\boldsymbol{T}\,$ as the transpose of our $\,\mathrm{Curl}\,\boldsymbol{T}\,$ given by \eqref{e8} (see e.g., \cite{Gurtin81,Gurtin00}).

The dislocation density tensor $\,\overline{\boldsymbol{K}}\,$ defined with the help of the $\,\mathrm{Curl}\,$ operator presents some advantages for micropolar and micromorphic media, as it can be seen in \cite{Neff_curl08,GhibaNeffExistence,NeffGhibaMicroModel,MadeoNeffGhibaW,OsterNeff}. We describe next the close relationship between the wryness tensor $\,\boldsymbol{\Gamma}\,$ and the dislocation density tensor $\,\overline{\boldsymbol{K}}\,$. One can prove by a straightforward calculation that the following relations hold \cite{Neff_curl08}
\begin{equation}\label{e9}
    -\,\overline{\boldsymbol{K}}\,=\,\boldsymbol{\Gamma}^T-(\mathrm{tr}\,\boldsymbol{\Gamma})\, \id_3\qquad\quad \mathrm{and}\qquad\quad -\,\boldsymbol{\Gamma}\,=\,\overline{\boldsymbol{K}}^T-\dfrac{1}{2}\, (\mathrm{tr}\,\overline{\boldsymbol{K}})\, \id_3\,\,.
\end{equation}
For infinitesimal strains this formula is well-known under the name \emph{Nye's formula}, see \cite{Neff_curl08} and there $(\,-\,\boldsymbol{\Gamma}\,)$ is also called \emph{Nye's curvature tensor} (cf. \cite{Nye53}). 
We insert next a shortened proof of relations \eqref{e9}: from the definition  \eqref{e7} it follows
\begin{equation}\label{e9,1}
     \overline{\boldsymbol{K}}\,=\,   -\,\big(\overline{\boldsymbol{R}}^T \boldsymbol{\overline{\boldsymbol{R}} },_k\,\big) \times \, \boldsymbol{e}_k \,\,.
\end{equation}
On the other hand, from \eqref{e2} and \eqref{e5}$_2$ we get
\begin{align*}
    \big(\overline{\boldsymbol{R}}^T \boldsymbol{\overline{\boldsymbol{R}} },_k\,\big) \otimes \, \boldsymbol{e}_k  & = \,\boldsymbol{\mathfrak{K}} \,=\, -\, \boldsymbol{\epsilon}\,\boldsymbol{\Gamma}\,=\,-(\epsilon_{ijr}\,\,\boldsymbol{e}_i \otimes\boldsymbol{e}_j\otimes \boldsymbol{e}_r)\,\big(\Gamma_{sk}\, \boldsymbol{e}_s\otimes \boldsymbol{e}_k\big) \vspace{4pt}\\
    & = \,-\, \epsilon_{ijs}\,\Gamma_{sk}\,\,\boldsymbol{e}_i \otimes\boldsymbol{e}_j\otimes \boldsymbol{e}_k\,
\end{align*}
and, consequently,
\begin{equation}\label{e9,2}
    \overline{\boldsymbol{R}}^T \boldsymbol{\overline{\boldsymbol{R}} },_k\,  = \, -\, \epsilon_{ijs}\,\Gamma_{sk}\,\,\boldsymbol{e}_i \otimes\boldsymbol{e}_j\,\,.
\end{equation}
If we replace \eqref{e9,2} in \eqref{e9,1} we obtain
\begin{align*}
    \overline{\boldsymbol{K}}\, & =\,  \big(\epsilon_{ijs}\,\Gamma_{sk}\,\,\boldsymbol{e}_i \otimes\boldsymbol{e}_j\,\big) \times \, \boldsymbol{e}_k\,=\,\epsilon_{ijs}\,\Gamma_{sk}\,\,\boldsymbol{e}_i \otimes \big(\boldsymbol{e}_j \times \, \boldsymbol{e}_k\,\big) \vspace{4pt}\\
    & = \, \epsilon_{ijs}\,\epsilon_{jkm}\, \Gamma_{sk}\,\,\boldsymbol{e}_i \otimes\boldsymbol{e}_m\, = \, \big( \delta_{sk}\,\delta_{im}-\delta_{sm}\,\delta_{ik} \big)\, \Gamma_{sk}\,\boldsymbol{e}_i \otimes\boldsymbol{e}_m \vspace{4pt}\\
    & = \, \Gamma_{ss}\,\boldsymbol{e}_i \otimes\boldsymbol{e}_i\, -  \, \Gamma_{mi}\,\boldsymbol{e}_i \otimes\boldsymbol{e}_m \, = \, (\mathrm{tr}\,\boldsymbol{\Gamma})\, \id_3 \,-\,\boldsymbol{\Gamma}^T\,,
\end{align*}
which is the equation  \eqref{e9}$_1\,$. If we consider the trace and the transpose of \eqref{e9}$_1\,$, then we find \eqref{e9}$_2\,$. Thus, the relations \eqref{e9} are proved.

Taking the norms in relations \eqref{e9} we obtain  the relationships 
\begin{equation}\label{e10}
    \|\,\overline{\boldsymbol{K}}\,\|^2\,=\,\|\,\boldsymbol{\Gamma}\|^2+\big (\mathrm{tr}\,\boldsymbol{\Gamma}\big)^2 \qquad\quad \mathrm{and}\qquad\quad \|\,\boldsymbol{\Gamma}\|^2\,=\,\|\,\overline{\boldsymbol{K}}\|^2-\dfrac{1}{4}\, \big(\mathrm{tr}\,\overline{\boldsymbol{K}}\big)^2\,.
\end{equation}
From \eqref{e9} we deduce the following relations between the traces, symmetric parts and skew-symmetric parts of these two tensors
\begin{equation}\label{e11}
    \mathrm{tr}\,\overline{\boldsymbol{K}}\,=\, 2\,\mathrm{tr}\,\boldsymbol{\Gamma},\qquad \mathrm{skew}\,\overline{\boldsymbol{K}}\,=\, \mathrm{skew}\,\boldsymbol{\Gamma},\qquad \mathrm{dev\,sym}\,\overline{\boldsymbol{K}}\,=\,-\, \mathrm{dev\,sym}\,\boldsymbol{\Gamma},
\end{equation}
where $\,\, \mathrm{dev}\,\boldsymbol{X}=\boldsymbol{X}-\frac{1}{3}\,( \mathrm{tr}\,\boldsymbol{X})\,\id_3\,$ is the deviatoric part of any second order tensor $\boldsymbol{X}$.

In view of \eqref{e9}--\eqref{e11}, we see that one can work either with the wryness tensor $\,\boldsymbol{\Gamma}\,$ or, alternatively, with the dislocation density tensor $\,\overline{\boldsymbol{K}}\,$, since they are in a simple one-to-one relation.

In what follows, we employ the strain and curvature measures $\,\overline{\boldsymbol{E}}=\overline{\boldsymbol{U}}-\id_3\,$ and $\,\overline{\boldsymbol{K}}\,$ to describe the deformation of the Cosserat elastic body.

\section{Constitutive assumptions}\label{sec3}

We begin this section with some simple remarks. Any second order tensor $\,\boldsymbol{X}\in\mathbb{R}^{n\times n}\,$ can be decomposed as direct sum in the form
$$\boldsymbol{X}\,=\, \mathrm{dev\,sym}\, \boldsymbol{X}\, + \, \mathrm{skew}\, \boldsymbol{X}\,   + \, \dfrac{1}{n}\,\big(\mathrm{tr}\,\boldsymbol{X}\big)\,\id_n\,\,,$$
which is the Cartan--Lie--algebra decomposition.
Since the three terms are mutually orthogonal, it follows (for the case $n=3$)
$$\|\,\boldsymbol{X}\|^2\,=\, \|\,\mathrm{dev\,sym}\, \boldsymbol{X}\|^2\, +  \, \|\,\mathrm{skew}\, \boldsymbol{X}\|^2\,   + \, \dfrac{1}{3}\,\big(\mathrm{tr}\,\boldsymbol{X}\big)^2\,\,.$$
This suggests to consider quadratic functions of $\,\overline{\boldsymbol{K}}\,$ of the form
\begin{equation}\label{e11,5}
    B(\overline{\boldsymbol{K}})=\,a_1\,\|\,\mathrm{dev\,sym}\, \overline{\boldsymbol{K}}\|^2\, +  \,a_2 \, \|\,\mathrm{skew}\, \overline{\boldsymbol{K}}\|^2\,   + \, a_3\,\big(\mathrm{tr}\,\overline{\boldsymbol{K}}\big)^2\,,
\end{equation}
where $\,a_i\,$ are some constant coefficients. In fact, every isotropic quadratic form in $\overline{\boldsymbol{K}}$ has this representation. Clearly, $\,B(\overline{\boldsymbol{K}})\,$ is a positive definite quadratic form of $\,\overline{\boldsymbol{K}}\,$ if and only if
\begin{equation}\label{e12}
    a_1>0,\qquad a_2>0\qquad\mathrm{and}\qquad a_3>0.
\end{equation}
In this case there exists a positive constant $\,c_1>0$ such that
\begin{equation}\label{e13}
    a_1\,\|\,\mathrm{dev\,sym}\, \overline{\boldsymbol{K}}\|^2\, +  \,a_2\, \|\,\mathrm{skew}\, \overline{\boldsymbol{K}}\|^2\,   + \, a_3\,\big(\mathrm{tr}\,\overline{\boldsymbol{K}}\big)^2\,\geq\, c_1\, \|\,\overline{\boldsymbol{K}}\|^2\,,
\end{equation}
for any $\,\overline{\boldsymbol{K}}\,$. By virtue of \eqref{e11}, we can rewrite $\,B(\overline{\boldsymbol{K}})\,$ as a function of $\,\boldsymbol{\Gamma}\,$
\begin{equation}\label{e13,5}
    B(\overline{\boldsymbol{K}})\,=\,\widetilde B(\boldsymbol{\Gamma})\,=\,a_1\,\|\,\mathrm{dev\,sym}\, \boldsymbol{\Gamma}\|^2\, +  \,a_2\, \|\,\mathrm{skew}\, \boldsymbol{\Gamma}\|^2\,   + \, 4a_3\,\big(\mathrm{tr}\,\boldsymbol{\Gamma}\big)^2\,.
\end{equation}
Then, the conditions \eqref{e12} ensure the positive definiteness of the quadratic form $\,\widetilde B(\boldsymbol{\Gamma})\,$ as a function of $\, \boldsymbol{\Gamma}\,$.

We present next the main constitutive assumptions. Let $\,W=\widehat{W}(\overline{\boldsymbol{U}},\overline{\boldsymbol{K}})=W(\overline{\boldsymbol{E}},\overline{\boldsymbol{K}})$
be the elastically stored energy density. Assume the following additive split of the energy density
\begin{equation}\label{e14}
    \begin{array}{rcl}
      W(\overline{\boldsymbol{E}},\overline{\boldsymbol{K}}) &=& W_{\mathrm{mp}}(\overline{\boldsymbol{E}} )+ W_{\mathrm{curv}}( \overline{\boldsymbol{K}}),\qquad\mathrm{where} \vspace{4pt}\\
      W_{\mathrm{mp}}(\overline{\boldsymbol{E}} ) &=& \mu\,\|\,\mathrm{dev\,sym}\, \overline{\boldsymbol{E}}\,\|^2\, +  \,\mu_c \, \|\,\mathrm{skew}\, \overline{\boldsymbol{E}}\,\|^2\,   + \, \dfrac{\kappa}{2}\,\big(\mathrm{tr}\,\overline{\boldsymbol{E}}\big)^2\,,\vspace{4pt}\\
       W_{\mathrm{curv}}( \overline{\boldsymbol{K}}) &=& \mu\,L_c^p\,\Big(a_1\|\,\mathrm{dev\,sym}\, \overline{\boldsymbol{K}}\|^2\, +  \,a_2 \|\,\mathrm{skew}\, \overline{\boldsymbol{K}}\|^2\,   + \, a_3\big(\mathrm{tr}\,\overline{\boldsymbol{K}}\big)^2\Big)^{{p}/{2}}\,\,.
    \end{array}
\end{equation}
Here, $\mu$ is the shear modulus and $\kappa$ is the bulk modulus of classical isotropic elasticity, while $\,\mu_c$ is called the \emph{Cosserat couple modulus}. Sine ira et studio we consider here only the standard case 
\begin{equation}\label{e15}
    \mu>0,\qquad \kappa>0,\qquad\text{and}\qquad \mu_c>0\,.
\end{equation}
The parameter $\,L_c$ introduces an internal length which is characteristic for the material and is responsible for size effects; $a_1\,,\, a_2 \,,\, a_3$ are dimensionless constitutive coefficients and $\,p\,$ is a constant exponent present in the curvature energy. We assume
\begin{equation}\label{e16}
     a_1>0,\qquad a_2>0,\qquad a_3>0\qquad \mathrm{and}\qquad L_c>0 , \qquad p\geq 2.
\end{equation}
\begin{remark}\label{rem3}
	In the linearized case 
		\[p=2,\qquad a_1>0,\qquad a_2=a_3=0\]
	is the case with conformal curvature treated in \cite{Neff_Paris_Maugin09}.
\end{remark}
\begin{remark}\label{rem4}
	In this paper we explicitly do not discuss the meaning or physical relevance of the Cosserat couple modulus $\mu_c$. Indeed, in \cite{Neff-PAMM-2004,Neff_Habil04} we have argued that $\mu_c$ should vanish for a continuous body. In this perspective $\mu_c>0$ can be viewed as a discreteness qualifier. Moreover, $\mu_c>0$ is intimately related to the occurrence of boundary layers. \\
	v1710ilThe present existence proof can be modified to include the case $\mu_c=0$ using Korn's extended inequality \cite{Neff-Korn-02} and $p>2$ (at present). Whether $p>2$ is really needed, is an open question \cite{Neff_Habil04, Neff_Forest07}. Note that having $\mu_c=0$ in the linear micropolar-Cosserat model is impossible since then the equations decouple altogether. However, it is possible to consider $\mu_c=0$ in more general linear micromorphic models with meaningful results, we refer the interested reader to \cite{NeffGhibaMicroModel,GhibaNeffExistence,MadeoNeffGhibaW,Neff_Mar14,Madeo14,Neff_Biot07}.
\end{remark}
In view of \eqref{e15} we deduce that $\,W_{\mathrm{mp}}(\overline{\boldsymbol{E}} )$ is positive definite. Thus, the inequality
\begin{equation}\label{e17}
    W_{\mathrm{mp}}(\overline{\boldsymbol{E}} )\,\geq\,c_2\,\|\,\overline{\boldsymbol{E}} \,\|^2
\end{equation}
holds for some constant $c_2>0$ and for any tensor $\overline{\boldsymbol{E}}$.
By virtue of \eqref{e13} and \eqref{e16} we derive that $\,W_{\mathrm{curv}}( \overline{\boldsymbol{K}})$ is also a coercive function, i.e. there exists a constant $c_3>0$ such that
\begin{equation}\label{e18}
    W_{\mathrm{curv}}( \overline{\boldsymbol{K}})\,\geq\,c_3\,\|\,\overline{\boldsymbol{K}} \,\|^p\,.
\end{equation}

In the next section we regard the equilibrium state of the elastic body as a solution of the minimization problem for the energy functional.

\section{Minimization problem and existence theorem}\label{sec4}

Let us introduce the energy functional by
\begin{align}\label{e19}
   I(\boldsymbol{\varphi}, \overline{\boldsymbol{R}}) &= \int_\Omega W(\overline{\boldsymbol{U}},\overline{\boldsymbol{K}})\, \mathrm{d}V -\Pi(\boldsymbol{\varphi}, \overline{\boldsymbol{R}}),\qquad \mathrm{where} \\
   \label{e20}
    \Pi(\boldsymbol{\varphi}, \overline{\boldsymbol{R}}) &= \Pi_f(\boldsymbol{\varphi})+ \Pi_M( \overline{\boldsymbol{R}})+ \Pi_N(\boldsymbol{\varphi})+ \Pi_{M_c}(  \overline{\boldsymbol{R}})
\end{align}
is the \emph{external loading potential}, which consists of the following parts:
\begin{align*}
    \Pi_f(\boldsymbol{\varphi}) &=\int_\Omega \langle\,\boldsymbol{f}\,,\,\boldsymbol{u}\,\rangle\, \mathrm{d}V\quad = \quad \text{ the potential of applied external volume forces }\boldsymbol{f}\,;
    \vspace{3pt}\\
       \Pi_N(\boldsymbol{\varphi}) & = \int_{\Gamma_s} \langle\,\boldsymbol{N}\,,\,\boldsymbol{u}\,\rangle\, \mathrm{d}S\quad = \,\,\, \text{ the potential of applied external surface forces }\boldsymbol{N}\,;
     \vspace{3pt}\\
       \Pi_M( \overline{\boldsymbol{R}}) &=\int_\Omega  \langle\,\boldsymbol{M}\,,\,\overline{\boldsymbol{R}}\,\rangle_{3\times 3} \, \mathrm{d}V;
     \vspace{6pt}\\
     \Pi_{M_c}( \overline{\boldsymbol{R}}) &= \int_{\Gamma_s}  \langle\,\boldsymbol{M}_c\,,\,\overline{\boldsymbol{R}}\,\rangle_{3\times 3} \, \mathrm{d}S;
\end{align*}

Here, $\Gamma_s$ is a nonempty subset of the boundary $\partial \Omega$ such that $\partial \Omega=\Gamma_s\cup \Gamma_d$ (with $\Gamma_s\cap \Gamma_d=\emptyset$). On $\Gamma_s$ we consider traction boundary conditions, while on $\Gamma_d$ we have Dirichlet--type boundary conditions. Moreover $\boldsymbol{M}\in L^q(\Omega,\mathbb{R}^{3\times 3})$ and $\boldsymbol{M}_c\in L^q(\Gamma_s,\mathbb{R}^{3\times 3})$, $\,q\,$ is the conjugated exponent to $p$ ($\frac{1}{p}\,+\frac{1}{q}=1$) and ``$\langle\;\,,\;\rangle_{3\times 3}$'' denotes the scalar product between tensors. 
With abuse of notation, we denote by
\begin{equation}\label{e20,1}
    L^p\big(\Omega,\mathrm{SO}(3)\big):=\big\{\boldsymbol{R}\in L^p(\Omega,\mathbb{R}^{3\times 3})\,|\, \boldsymbol{R}(\boldsymbol{x})\in \mathrm{SO}(3)\,\,\text{for a.e. } \boldsymbol{x}\in\Omega\big\}
\end{equation}
with the induced strong topology of $\, L^p(\Omega,\mathbb{R}^{3\times 3})$. The set $\,L^p\big(\Omega,\mathrm{SO}(3)\big)$ is a closed subset of $\, L^p(\Omega,\mathbb{R}^{3\times 3})$: indeed, let $\big(\boldsymbol{R}^k\big)_{k\geq 1}\subset L^p\big(\Omega,\mathrm{SO}(3)\big)$ be a sequence such that $\,\,\boldsymbol{R}^k\longrightarrow\boldsymbol{R}\,$ in $\,L^p(\Omega,\mathbb{R}^{3\times 3})$. By a known result in functional analysis (see, e.\,g. \cite{Alt06}, p. 56, Lemma 1.20) it follows that on a subsequence $\,\,\boldsymbol{R}^k(\boldsymbol{x}) \longrightarrow\boldsymbol{R}(\boldsymbol{x})\,$ for a.e. $\, \boldsymbol{x}\in\Omega$, and hence, $\, \boldsymbol{R}(\boldsymbol{x})\in \mathrm{SO}(3)\,$ for a.e. $\,\boldsymbol{x}\in\Omega\,$, i.e. $\,\boldsymbol{R}\in L^p\big(\Omega,\mathrm{SO}(3)\big)$.

Similarly,  we introduce the space \cite{Neff_Chelminski_ifb07,Tambaca10}
\begin{equation}\label{e20,2}
    W^{1,p}\big(\Omega,\mathrm{SO}(3)\big):=\big\{\boldsymbol{R}\in W^{1,p}(\Omega,\mathbb{R}^{3\times 3})\,|\, \boldsymbol{R}(\boldsymbol{x})\in \mathrm{SO}(3)\,\,\text{for a.e. } \boldsymbol{x}\in\Omega\big\}
\end{equation}
with the induced strong and weak topologies. We mention that the strong or the weak limit $\,\boldsymbol{R}\,$ of any sequence $\big(\boldsymbol{R}^k\big)\subset W^{1,p}\big(\Omega,\mathrm{SO}(3)\big)$ is also an element of $\,W^{1,p}\big(\Omega,\mathrm{SO}(3)\big)$: indeed, both convergences imply that $\,\,\boldsymbol{R}^k(\boldsymbol{x}) \longrightarrow\boldsymbol{R}(\boldsymbol{x})\,$   a.e. in $\, \Omega\,$, and it follows that $\,\boldsymbol{R}\in W^{1,p}\big(\Omega,\mathrm{SO}(3)\big)$.

The admissible set for the energy functional \eqref{e19} is
\begin{equation}\label{e21}
    \mathcal{A}=\big\{\,(\boldsymbol{\varphi}, \overline{\boldsymbol{R}})\in H^1(\Omega, \mathbb{R}^3)\times W^{1,p}\big(\Omega,\mathrm{SO}(3)\big)\,\big|\,\,\, \boldsymbol{\varphi}_{\big|\Gamma_d}= \boldsymbol{\varphi}_d\,,\,\, \overline{\boldsymbol{R}}_{\big|\Gamma_d}= \boldsymbol{R}_d\,\,\big\},
\end{equation}
where $\,\boldsymbol{\varphi}_d\,$ and $\, \boldsymbol{R}_d\,$ are the given values of the deformation $\,\boldsymbol{\varphi}\,$ and microrotation $\, \overline{\boldsymbol{R}}\,$ on the Dirichlet part $\,\Gamma_d\,$ of the boundary (in the sense of trace).

Taking into account \eqref{e19}--\eqref{e21} we formulate the following minimization problem:\\
\emph{Find a pair $(\boldsymbol{\varphi}, \overline{\boldsymbol{R}})$ minimizing the energy functional $I(\boldsymbol{\varphi}, \overline{\boldsymbol{R}})$ defined by \eqref{e19} over the admissible set $\mathcal{A}$ given by \eqref{e21}.}

\smallskip
The following theorem states the existence of a minimizer to this problem.

\begin{theorem}
Let $\,\Omega\subset\mathbb{R}^3$ be a bounded domain with Lipschitz boundary $\partial\Omega$. The constitutive parameters satisfy \eqref{e15} and \eqref{e16}. Assume that the external load functions and the Dirichlet data $\,\boldsymbol{\varphi}_d\,$, $ \boldsymbol{R}_d\,$ fulfill the regularity requirements
\begin{equation}
     \label{e22}
    \boldsymbol{f}\in L^2(\Omega,\mathbb{R}^{3 }),\quad\boldsymbol{N}\in L^2(\Gamma_s,\mathbb{R}^{3}),\quad
    \boldsymbol{M}\in L^q(\Omega,\mathbb{R}^{3\times 3}),\quad\boldsymbol{M}_c\in L^q(\Gamma_s,\mathbb{R}^{3\times 3})
\end{equation}
\begin{equation}
    \label{e23}
    \boldsymbol{\varphi}_d\in H^1(\Omega, \mathbb{R}^3),\qquad \boldsymbol{R}_d\in W^{1,p}\big(\Omega,\mathrm{SO}(3)\big).
\end{equation}
Then, the above minimization problem \eqref{e19}, \eqref{e21} admits at least one minimizing solution pair $(\boldsymbol{\varphi}, \overline{\boldsymbol{R}})\in\mathcal{A}$.
\end{theorem}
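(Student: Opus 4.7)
The plan is to apply the direct method of the calculus of variations: prove coercivity of $I$ on $\mathcal{A}$, extract a weakly convergent subsequence from a minimizing sequence, then establish weak lower semicontinuity to identify the weak limit as a minimizer. The admissible set $\mathcal{A}$ is nonempty since $(\boldsymbol{\varphi}_d,\boldsymbol{R}_d)\in\mathcal{A}$ by \eqref{e23}, and the growth conditions together with \eqref{e22} make $I(\boldsymbol{\varphi}_d,\boldsymbol{R}_d)$ finite, so $\inf_{\mathcal{A}}I<+\infty$.

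For coercivity, the crucial observation is that $\overline{\boldsymbol{R}}\in\mathrm{SO}(3)$ pointwise, so $\|\overline{\boldsymbol{R}}\|_{L^\infty}\le\sqrt{3}$ automatically. From $\mathrm{Grad}\,\boldsymbol{\varphi}=\overline{\boldsymbol{R}}(\overline{\boldsymbol{E}}+\id_3)$ and orthogonality of $\overline{\boldsymbol{R}}$ one obtains the pointwise bound $\|\mathrm{Grad}\,\boldsymbol{\varphi}\|\le\|\overline{\boldsymbol{E}}\|+\sqrt{3}$, which combined with \eqref{e17} controls $\|\mathrm{Grad}\,\boldsymbol{\varphi}\|_{L^2}^2$ by $\int_\Omega W_{\mathrm{mp}}\,dV$ up to an additive constant. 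From \eqref{e6} and \eqref{e10}$_1$ we have $\|\mathrm{Grad}\,\overline{\boldsymbol{R}}\|^2=2\|\boldsymbol{\Gamma}\|^2\le 2\|\overline{\boldsymbol{K}}\|^2$, so \eqref{e18} controls $\|\mathrm{Grad}\,\overline{\boldsymbol{R}}\|_{L^p}^p$ by $\int_\Omega W_{\mathrm{curv}}\,dV$. Together with Friedrichs' inequality applied to $\boldsymbol{\varphi}-\boldsymbol{\varphi}_d$ and $\overline{\boldsymbol{R}}-\boldsymbol{R}_d$, which vanish on $\Gamma_d$ in the trace sense, these bounds dominate the full $H^1\times W^{1,p}$ norm. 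The linear load terms $\Pi_f,\Pi_N,\Pi_M,\Pi_{M_c}$ are estimated by Cauchy--Schwarz, the trace theorem, and H\"older's inequality with conjugate exponent $q$, after which Young's inequality absorbs a small fraction into the stored-energy bound. Hence $I$ is bounded below on $\mathcal{A}$ and every minimizing sequence is bounded in $H^1(\Omega,\R^3)\times W^{1,p}(\Omega,\R^{3\times 3})$.

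Given a minimizing sequence $(\boldsymbol{\varphi}_k,\overline{\boldsymbol{R}}_k)\subset\mathcal{A}$, reflexivity yields a subsequence with $\boldsymbol{\varphi}_k\rightharpoonup\boldsymbol{\varphi}^*$ in $H^1$ and $\overline{\boldsymbol{R}}_k\rightharpoonup\overline{\boldsymbol{R}}^*$ in $W^{1,p}$. Since $\Omega$ is bounded and Lipschitz, Rellich--Kondrachov gives strong $L^p$ convergence of $\overline{\boldsymbol{R}}_k$, and because $\|\overline{\boldsymbol{R}}_k\|_{L^\infty}\le\sqrt{3}$ uniformly, interpolation upgrades this to strong $L^r$ convergence for every $r<\infty$, with pointwise a.e.\ convergence on a further subsequence. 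The closedness remark after \eqref{e20,2} gives $\overline{\boldsymbol{R}}^*(\boldsymbol{x})\in\mathrm{SO}(3)$ a.e., and the Dirichlet boundary values pass to the limit by continuity of the trace operator; hence $(\boldsymbol{\varphi}^*,\overline{\boldsymbol{R}}^*)\in\mathcal{A}$.

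Finally, weak lower semicontinuity of $\int_\Omega W\,dV$: strong $L^r$ convergence of $\overline{\boldsymbol{R}}_k$ together with weak $L^2$ convergence of $\mathrm{Grad}\,\boldsymbol{\varphi}_k$ gives $\overline{\boldsymbol{E}}_k=\overline{\boldsymbol{R}}_k^T\mathrm{Grad}\,\boldsymbol{\varphi}_k-\id_3\rightharpoonup\overline{\boldsymbol{E}}^*$ in $L^2$, and since $W_{\mathrm{mp}}$ is a positive-definite quadratic form, hence convex and continuous, $\int W_{\mathrm{mp}}\,dV$ is weakly lower semicontinuous. Likewise, strong $L^r$ convergence of $\overline{\boldsymbol{R}}_k$ and weak $L^p$ convergence of $\mathrm{Curl}\,\overline{\boldsymbol{R}}_k$ (a component of $\mathrm{Grad}\,\overline{\boldsymbol{R}}_k$) yield $\overline{\boldsymbol{K}}_k=\overline{\boldsymbol{R}}_k^T\mathrm{Curl}\,\overline{\boldsymbol{R}}_k\rightharpoonup\overline{\boldsymbol{K}}^*$ in $L^p$, and $W_{\mathrm{curv}}$ is convex and continuous in $\overline{\boldsymbol{K}}$ (being the $p/2$-power with $p\ge 2$ of the positive-definite quadratic form \eqref{e11,5}), so $\int W_{\mathrm{curv}}\,dV$ is weakly lower semicontinuous. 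The load terms are (sequentially) continuous in the relevant topologies, using strong $L^2$ convergence of $\boldsymbol{\varphi}_k$ and its traces (Rellich--Kondrachov plus trace compactness) against $\boldsymbol{f}\in L^2,\,\boldsymbol{N}\in L^2$, and strong $L^p$ convergence of $\overline{\boldsymbol{R}}_k$ and its traces against $\boldsymbol{M}\in L^q,\,\boldsymbol{M}_c\in L^q$. Combining everything gives $I(\boldsymbol{\varphi}^*,\overline{\boldsymbol{R}}^*)\le\liminf_k I(\boldsymbol{\varphi}_k,\overline{\boldsymbol{R}}_k)=\inf_{\mathcal{A}} I$, so $(\boldsymbol{\varphi}^*,\overline{\boldsymbol{R}}^*)$ is a minimizer. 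The chief obstacle is the passage to the limit in the \emph{product} $\overline{\boldsymbol{R}}_k^T\mathrm{Curl}\,\overline{\boldsymbol{R}}_k$: weak convergence of both factors alone is insufficient, and it is precisely the strong $L^r$ convergence of $\overline{\boldsymbol{R}}_k$ for every $r<\infty$, available for all $p\ge 2$ thanks to the automatic $L^\infty$ bound from the $\mathrm{SO}(3)$ constraint, that drives the argument without requiring $p>3$ as in \cite{Tambaca10}.
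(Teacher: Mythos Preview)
Your proof is correct and follows essentially the same route as the paper's: the direct method, coercivity via \eqref{e17}, \eqref{e18} and Poincar\'e/Friedrichs, Rellich--Kondrachov for strong convergence of $\overline{\boldsymbol{R}}_k$, strong--weak product convergence to identify the limits of $\overline{\boldsymbol{E}}_k$ and $\overline{\boldsymbol{K}}_k$, and convexity of $W$ in $(\overline{\boldsymbol{E}},\overline{\boldsymbol{K}})$ for weak lower semicontinuity. The only notable difference is that you exploit the uniform $L^\infty$ bound $\|\overline{\boldsymbol{R}}_k\|\le\sqrt{3}$ to upgrade to strong $L^r$ convergence for every $r<\infty$ and then obtain weak $L^2$ (resp.\ $L^p$) convergence of the products directly, whereas the paper first extracts abstract weak limits $\widehat{\boldsymbol{E}},\widehat{\boldsymbol{K}}$ by boundedness, proves the products converge weakly in $L^1$ via the standard ``weak $\times$ strong'' lemma, and identifies them with $\widehat{\boldsymbol{E}},\widehat{\boldsymbol{K}}$ by uniqueness of weak limits; your variant is a mild streamlining but not a different argument. (Your mention of Friedrichs for $\overline{\boldsymbol{R}}-\boldsymbol{R}_d$ is harmless but unnecessary, since the $L^\infty$ bound already controls $\|\overline{\boldsymbol{R}}\|_{L^p}$.)
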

\begin{proof}
We employ the direct methods of the calculus of variations. Let us show first that the external loading potential satisfies the following estimate: there exist some positive constants $c_4\,$, $c_5$ such that
\begin{equation}\label{e24}
    \big|\,\Pi(\boldsymbol{\varphi}, \overline{\boldsymbol{R}})\,\big|\,\leq\,c_4\, \|\boldsymbol{\varphi}\,\|_{H^1(\Omega)}+c_5\,,
\end{equation}
for any $(\boldsymbol{\varphi}, \overline{\boldsymbol{R}})\in\mathcal{A}$. Indeed, since $\,
\boldsymbol{u}:=\boldsymbol{\varphi}-\boldsymbol{\varphi}_d\in H^1(\Omega, \mathbb{R}^3)$ and $\,\boldsymbol{f}\in L^2(\Omega,\mathbb{R}^{3 })$, we obtain from the H\"older inequality
\begin{equation}\label{e25}
    \big|\,\Pi_f(\boldsymbol{\varphi})\,\big|\,=\,\big|\,\int_\Omega \langle\,\boldsymbol{f}\,,\,\boldsymbol{u}\,\rangle\, \mathrm{d}V\,\big|\, \leq \, \|\boldsymbol{f}\,\|_{L^2(\Omega)}\cdot  \|\boldsymbol{u}\,\|_{L^2(\Omega)} \, \leq \, \|\boldsymbol{f}\,\|_{L^2(\Omega)}\cdot  \|\boldsymbol{u}\,\|_{H^1(\Omega)}\,\,.
\end{equation}
Analogously, since $\,
\boldsymbol{u}\in L^2(\Gamma_s, \mathbb{R}^3)$ and $\,\boldsymbol{N}\in L^2(\Gamma_s,\mathbb{R}^{3 })$, we obtain from the H\"older inequality
\begin{equation*}
    \big|\,\Pi_N(\boldsymbol{\varphi})\,\big|\,=\,\big|\,\int_{\Gamma_s} \langle\,\boldsymbol{N}\,,\,\boldsymbol{u}\,\rangle\, \mathrm{d}S\,\big|\, \leq \, \|\boldsymbol{N}\,\|_{L^2(\Gamma_s)}\cdot  \|\boldsymbol{u}\,\|_{L^2(\Gamma_s)}
\end{equation*}
and using the trace theorem $\,\|\boldsymbol{u}\|_{L^2(\Gamma_s)}  \leq\,c\,\|\boldsymbol{u}\|_{H^1(\Omega)}\,$ we find
\begin{equation}\label{e26}
    \big|\,\Pi_N(\boldsymbol{\varphi})\,\big|\, \leq \, c\, \|\boldsymbol{N}\,\|_{L^2(\Gamma_s)}\cdot  \|\boldsymbol{u}\,\|_{H^1(\Omega)} \, .
\end{equation}
On the other hand, for any $\,\overline{\boldsymbol{R}} \in  \mathrm{SO}(3)$ we have $\,\|\overline{\boldsymbol{R}}\,\|^2=3$ and, hence,
$$\|\overline{\boldsymbol{R}}\,\|^2_{L^2(\Omega)}=3\,\mathrm{vol}(\Omega)\qquad\text{and}\qquad \|\overline{\boldsymbol{R}}\,\|^2_{L^2(\Gamma_s)}=3\,\mathrm{area}(\Gamma_s).$$
Since  $\,\Pi_M$ and $ \Pi_{M_c}$ are bounded operators we deduce
\begin{equation}\label{e27}
    \begin{array}{l}
        \big|\, \Pi_M( \overline{\boldsymbol{R}})\, \big|\,\leq \, c_6\,\|\overline{\boldsymbol{R}}\,\|_{L^2(\Omega)}= c_6\big(3\,\mathrm{vol}(\Omega)\,\big)^{1/2}\,,
     \vspace{6pt}\\
     \big|\, \Pi_{M_c}( \overline{\boldsymbol{R}})\,\big|\,\leq\, c_7\, \|\overline{\boldsymbol{R}}\,\|_{L^2(\Gamma_s)}=c_7\,\big(3\,\mathrm{area}(\Gamma_s) \,\big)^{1/2}\,.
    \end{array}
\end{equation}
By virtue of \eqref{e20} and the inequalities \eqref{e25}--\eqref{e27} we see that the estimate \eqref{e24} holds true.\smallskip

We observe that the strain measures satisfy
\begin{equation}\label{e28}
    \overline{\boldsymbol{E}}\in L^2(\Omega,\mathbb{R}^{3\times 3}) ,\qquad  \overline{\boldsymbol{K}}\in L^p(\Omega,\mathbb{R}^{3\times 3}).
\end{equation}
Indeed, since $\,\boldsymbol{\varphi}\in H^1(\Omega, \mathbb{R}^3)$ and $\overline{\boldsymbol{R}}\in \text{SO(3)}$, we find that $\boldsymbol{F}=\text{Grad}\,\boldsymbol{\varphi}\in  L^2(\Omega,\mathbb{R}^{3\times 3}) $ and
\begin{equation*}
    \int_\Omega\|\,\overline{\boldsymbol{U}}\,\|^2\, \mathrm{d}V\,= \int_\Omega\|\,\overline{\boldsymbol{R}}^T\boldsymbol{F}\,\|^2\, \mathrm{d}V\,=  \int_\Omega\|\,\boldsymbol{F}\,\|^2\, \mathrm{d}V\,<+\infty\,.
\end{equation*}
Thus, $\overline{\boldsymbol{E}}=\overline{\boldsymbol{U}}-\id_3\in L^2(\Omega,\mathbb{R}^{3\times 3})$ and the relation \eqref{e28}$_1$ holds. To show \eqref{e28}$_2$ we note that $\overline{\boldsymbol{R}},_{k}\in L^p(\Omega,\mathbb{R}^{3\times 3})$ and from \eqref{e7} it follows
\begin{equation}\label{e29}
    \|\overline{\boldsymbol{K}}\,\|=\,\| \overline{\boldsymbol{R}}^T\,\mathrm{Curl}\, \overline{\boldsymbol{R}}\,\|\,=\, \| \mathrm{Curl}\, \overline{\boldsymbol{R}}\,\|\,=\,\|  \boldsymbol{\overline{\boldsymbol{R}} },_k \times \, \boldsymbol{e}_k\,\|\,.
\end{equation}
For any second order tensor $\,\boldsymbol{S}=S_{ij}\,\boldsymbol{e}_i\otimes\boldsymbol{e}_j\,$ we have
\begin{equation*}
    \begin{array}{c}
      \| \boldsymbol{S}\times\boldsymbol{e}_1\|^2=\|S_{ij}\, \boldsymbol{e}_i\otimes(\boldsymbol{e}_j\times\boldsymbol{e}_1)\|^2= \| S_{i3}\,\boldsymbol{e}_i\otimes\boldsymbol{e}_2- S_{i2}\,\boldsymbol{e}_i\otimes\boldsymbol{e}_3\|^2 \\
    =\displaystyle \sum_{i=1}^3 \big[(S_{i3})^2+(S_{i2})^2\big]\,\leq\,S_{ij}\,S_{ij} =\|\boldsymbol{S}\,\|^2\,,
    \end{array}
\end{equation*}
and analogously for $\boldsymbol{e}_2$ and $\boldsymbol{e}_3\,$. We see that
$$\|  \boldsymbol{\overline{\boldsymbol{R}} },_k \times \, \boldsymbol{e}_k\,\|\,\leq \|  \boldsymbol{\overline{\boldsymbol{R}} },_k  \,\|\,\qquad\text{for any}\quad k=1,2,3\,\,\,(\text{not summed})$$
and then from \eqref{e29} we get
\begin{equation*}
     \int_\Omega\|\,\overline{\boldsymbol{K}}\,\|^p\, \mathrm{d}V\,= \int_\Omega\|\,\boldsymbol{\overline{\boldsymbol{R}} },_k \times \, \boldsymbol{e}_k\,\|^p\, \mathrm{d}V\, \leq \int_\Omega
     \displaystyle \sum_{k=1}^3
     \|\,\boldsymbol{\overline{\boldsymbol{R}} },_k\,\|^p\, \mathrm{d}V\,< \,+\infty,
\end{equation*}
so that \eqref{e28}$_2$ holds.\smallskip

Next we prove the existence of some positive constants $k_1\,,\,k_2\,,\,k_3\,$ such that
\begin{equation}\label{e30}
     I(\boldsymbol{\varphi}, \overline{\boldsymbol{R}})\,\geq \, k_1\,\|\boldsymbol{\varphi}-\boldsymbol{\varphi}_d\,\|^2_{H^1(\Omega)}  - k_2\,\|\boldsymbol{\varphi}-\boldsymbol{\varphi}_d\,\|_{H^1(\Omega)} - k_3\,,\qquad \forall\, (\boldsymbol{\varphi}, \overline{\boldsymbol{R}})\in\mathcal{A}.
\end{equation}
On the basis of \eqref{e14}$_1\,$, \eqref{e19}, \eqref{e17} and \eqref{e24}, we have
\begin{equation}\label{e31}
     I(\boldsymbol{\varphi}, \overline{\boldsymbol{R}})\,\geq\, \int_\Omega W_{\mathrm{mp}}(\overline{\boldsymbol{E}} )\, \mathrm{d}V -\Pi(\boldsymbol{\varphi}, \overline{\boldsymbol{R}})\,\geq\, c_2\int_\Omega \|\,\overline{\boldsymbol{E}} \,\|^2 \, \mathrm{d}V -c_4 \,\|\,\boldsymbol{\varphi} \,\|_{H^1(\Omega)}-c_5\,\,.
\end{equation}
On the other hand, we can write
\begin{equation*}
    \begin{array}{rcl}
      \|\,\overline{\boldsymbol{E}} \,\|^2 & = & \|\,\overline{\boldsymbol{U}} -\id_3\,\|^2=\|\,\overline{\boldsymbol{R}}^T\boldsymbol{F}-\id_3 \,\|^2 = \|\,\boldsymbol{F}-\overline{\boldsymbol{R}}\,\|^2=\|(\boldsymbol{\varphi},_i- \boldsymbol{d}_i)\otimes \boldsymbol{e}_i\|^2 \vspace{3pt}\\
      & = &\langle\,(\boldsymbol{\varphi},_i- \boldsymbol{d}_i)\,,(\boldsymbol{\varphi},_i- \boldsymbol{d}_i)\,\rangle= \langle\,\boldsymbol{\varphi},_i\,,\,\boldsymbol{\varphi},_i\rangle- 2\, \langle\, \boldsymbol{\varphi},_i\,,\, \boldsymbol{d}_i\,\rangle+3
    \end{array}
\end{equation*}
and using the Cauchy--Schwarz inequality we get
\begin{equation*}
    \int_\Omega \|\,\overline{\boldsymbol{E}} \,\|^2\, \mathrm{d}V\,  \geq  \int_\Omega  \langle\,\boldsymbol{\varphi},_i\,,\,\boldsymbol{\varphi},_i\rangle\, \mathrm{d}V\,- 2\Big(\int_\Omega  \langle\,\boldsymbol{d}_i\,,\,\boldsymbol{d}_i\,\rangle\, \mathrm{d}V\Big)^{1/2} \Big(\int_\Omega  \langle\,\boldsymbol{\varphi},_j\,,\,\boldsymbol{\varphi},_j\rangle\, \mathrm{d}V\Big)^{1/2} +3\,\mathrm{vol}(\Omega)
\end{equation*}
and consequently
\begin{equation}\label{e32}
    \|\,\overline{\boldsymbol{E}} \,\|^2_{L^2(\Omega)}\,\geq\,  \|\,\text{Grad}\,\boldsymbol{\varphi} \,\|^2_{L^2(\Omega)} - 2\,\big(3\,\mathrm{vol}(\Omega)\big)^{1/2}\, \|\,\boldsymbol{\varphi} \,\|_{H^1(\Omega)}+ 3\,\mathrm{vol}(\Omega).
\end{equation}
For any $(\boldsymbol{\varphi}, \overline{\boldsymbol{R}})\in\mathcal{A}$ we have $\,\boldsymbol{\varphi}-\boldsymbol{\varphi}_d\in H^1(\Omega, \mathbb{R}^3)$ and $\,\boldsymbol{\varphi}-\boldsymbol{\varphi}_d=\,\boldsymbol{0}$ on $\Gamma_d\,$. Applying the Poincar\'e inequality we find
$$ \|\text{Grad}\big(\boldsymbol{\varphi}-\boldsymbol{\varphi}_d\big) \,\|^2_{L^2(\Omega)} \,\geq\,c_p^+\,\|\,\boldsymbol{\varphi}-\boldsymbol{\varphi}_d \,\|^2_{H^1(\Omega)}\,\,,$$
which implies
\begin{equation}\label{e33}
    \|\text{Grad}\boldsymbol{\varphi}  \,\|^2_{L^2(\Omega)} \,\geq\,c_p^+\,\|\,\boldsymbol{\varphi}-\boldsymbol{\varphi}_d \,\|^2_{H^1(\Omega)} - 2\,
     \|\text{Grad}\boldsymbol{\varphi}_d  \,\|_{L^2(\Omega)}\, \|\boldsymbol{\varphi} \|_{H^1(\Omega)}- \|\text{Grad}\boldsymbol{\varphi}_d  \,\|^2_{L^2(\Omega)}\,.
\end{equation}
If we insert \eqref{e33} into \eqref{e32} and the result into \eqref{e31}, we obtain that
$$ I(\boldsymbol{\varphi}, \overline{\boldsymbol{R}})\,\geq \, K_1\,\|\boldsymbol{\varphi}-\boldsymbol{\varphi}_d\|^2_{H^1(\Omega)}  - K_2\,\|\boldsymbol{\varphi}\,\|_{H^1(\Omega)} - K_3\,,$$
for some constants $K_1\,,\,K_2\,,\,K_3\,>0$. Hence, the inequality \eqref{e30} is valid.
From \eqref{e30} we can infer that the functional $\,I(\boldsymbol{\varphi}, \overline{\boldsymbol{R}}\,)\,$ is bounded from below on the admissible set $\mathcal{A}$.
\smallskip

In view of \eqref{e21}, \eqref{e23} we have $\,(\boldsymbol{\varphi}_d, \boldsymbol{R}_d)\in\mathcal{A}$, and from  \eqref{e24}, \eqref{e28} we deduce
\begin{equation*}
     I(\boldsymbol{\varphi}_d, \boldsymbol{R}_d) \,\leq\, \int_\Omega W_{\mathrm{mp}}(\overline{\boldsymbol{E}}_d )+W_{\mathrm{curv}}(\overline{\boldsymbol{K}}_d )\, \mathrm{d}V +c_4 \,\|\,\boldsymbol{\varphi}_d \,\|_{H^1(\Omega)}+c_5 < +\infty
\end{equation*}
where $\,\overline{\boldsymbol{E}}_d=\boldsymbol{R}_d^T\,\text{Grad}\,\boldsymbol{\varphi}_d- \id_3\in L^2(\Omega,\mathbb{R}^{3\times 3})\,$ and
$\, \overline{\boldsymbol{K}}_d= \boldsymbol{R}_d^T\,\mathrm{Curl}\, \boldsymbol{R}_d\in L^p(\Omega,\mathbb{R}^{3\times 3})$.

Thus, we can choose an infimizing sequence  $\,(\boldsymbol{\varphi}^k, \overline{\boldsymbol{R}}^k)_{k\geq 1}\in\mathcal{A}\,$ such that
\begin{equation}\label{e34}
    \displaystyle \lim_{k\rightarrow\infty} I(\boldsymbol{\varphi}^k, \overline{\boldsymbol{R}}^k)= \inf_{(\boldsymbol{\varphi}, \overline{\boldsymbol{R}})\in\mathcal{A}} I(\boldsymbol{\varphi}, \overline{\boldsymbol{R}}\,)\qquad\text{and}\qquad I(\boldsymbol{\varphi}^k, \overline{\boldsymbol{R}}^k)\leq I(\boldsymbol{\varphi}_d, \boldsymbol{R}_d\,) < +\infty,
\end{equation}
for any $\,k\geq 1$. Taking into account \eqref{e30} and \eqref{e34}$_2$ we find
$$ I(\boldsymbol{\varphi}_d, \boldsymbol{R}_d\,)\,\geq\, I(\boldsymbol{\varphi}^k, \overline{\boldsymbol{R}}^k) \,\geq \, k_1\,\|\boldsymbol{\varphi}^k-\boldsymbol{\varphi}_d\,\|^2_{H^1(\Omega)}  - k_2\,\|\boldsymbol{\varphi}^k-\boldsymbol{\varphi}_d\,\|_{H^1(\Omega)} - k_3\,.$$
Since $k_1>0$, we deduce that $\,\|\boldsymbol{\varphi}^k-\boldsymbol{\varphi}_d\,\|_{H^1(\Omega)}$ is bounded, i.e. the sequence $\,(\boldsymbol{\varphi}^k)_{k\geq 1}$ is bounded in $H^1(\Omega,\mathbb{R}^{3})$.
Then, there exists an element $\,\widehat{\boldsymbol{\varphi}}\in H^1(\Omega,\mathbb{R}^{3})$ and a subsequence of $\,(\boldsymbol{\varphi}^k)_{k\geq 1}\,$, not relabeled, such that $\,\boldsymbol{\varphi}^k $ converges weakly to  $\,\widehat{\boldsymbol{\varphi}}\,$ in $ H^1(\Omega,\mathbb{R}^{3})$ and $\,\boldsymbol{\varphi}^k $ converges strongly to  $\,\widehat{\boldsymbol{\varphi}}\,$ in $ L^2(\Omega,\mathbb{R}^{3})$, i.e.
\begin{equation}\label{e35}
    \boldsymbol{\varphi}^k \stackrel{H^1(\Omega) }{-\!\!\!\rightharpoonup } \widehat{\boldsymbol{\varphi}}\qquad\text{and}\qquad  \boldsymbol{\varphi}^k \stackrel{L^2 (\Omega)}{\longrightarrow } \widehat{\boldsymbol{\varphi}}\,\,.
\end{equation}
Let us denote the strain measures corresponding to $\,(\boldsymbol{\varphi}^k, \overline{\boldsymbol{R}}^k)$ by
\begin{equation}\label{e36}
\begin{array}{l}
    \overline{\boldsymbol{E}}^k\, = \,\overline{\boldsymbol{U}}^k- \id_3=\big(\overline{\boldsymbol{R}}^k\big)^T\,\text{Grad}\, \boldsymbol{\varphi}^k- \id_3\in L^2(\Omega,\mathbb{R}^{3\times 3}),
    \vspace{4pt}\\
    \overline{\boldsymbol{K}}^k=\big(\overline{\boldsymbol{R}}^k\big)^T\,\mathrm{Curl}\, \overline{\boldsymbol{R}}^k\in L^p(\Omega,\mathbb{R}^{3\times 3}).
    \end{array}
\end{equation}
In view of \eqref{e19},\eqref{e24} and \eqref{e34}$_2\,$, we have
\begin{equation*}
    \int_\Omega W_{\mathrm{mp}}(\overline{\boldsymbol{E}}^k )\, \mathrm{d}V \, \leq\,  I(\boldsymbol{\varphi}^k, \overline{\boldsymbol{R}}^k)+ \Pi (\boldsymbol{\varphi}^k, \overline{\boldsymbol{R}}^k) \,\leq\, I(\boldsymbol{\varphi}_d, \boldsymbol{R}_d) +c_4 \,\|\,\boldsymbol{\varphi}^k \,\|_{H^1(\Omega)}+c_5
\end{equation*}
and from \eqref{e17} and the boundedness of $\, \boldsymbol{\varphi}^k$ in $H^1(\Omega;\mathbb{R}^3)$ we infer the existence of a constant $M_1>0$ such that
$$\|\overline{\boldsymbol{E}}^k\|_{L^2(\Omega)}\,\leq\,M_1\,\,.$$
Hence, there exists an element $\,\widehat{\boldsymbol{E}}\in L^2(\Omega,\mathbb{R}^{3\times 3})\,$ and a subsequence of $(\overline{\boldsymbol{E}}^k)$, not relabeled, with
\begin{equation}\label{e37}
     \overline{\boldsymbol{E}}^k \stackrel{L^2(\Omega) }{-\!\!\!\rightharpoonup } \widehat{\boldsymbol{E}}\,.
\end{equation}
Similarly, from \eqref{e19},\eqref{e24} and \eqref{e34}$_2\,$, we find
\begin{equation*}
    \int_\Omega W_{\mathrm{curv}}(\overline{\boldsymbol{K}}^k )\, \mathrm{d}V \, \leq\,  I(\boldsymbol{\varphi}_d, \boldsymbol{R}_d) +c_4 \,\|\,\boldsymbol{\varphi}^k \,\|_{H^1(\Omega)}+c_5
\end{equation*}
and using \eqref{e18} we deduce that $(\overline{\boldsymbol{K}}^k)$ is bounded in $\,L^p(\Omega,\mathbb{R}^{3\times 3})\,$:
\begin{equation}\label{e38}
\|\overline{\boldsymbol{K}}^k\|_{L^p(\Omega)}\,\leq\,M_2\qquad\text{for some constant } M_2\,.
\end{equation}
This implies the existence of a  subsequence (not relabeled) of $(\overline{\boldsymbol{K}}^k)$ which converges weakly to a limit $\,\widehat{\boldsymbol{K}}\in L^p(\Omega,\mathbb{R}^{3\times 3})\,$:
\begin{equation}\label{e39}
     \overline{\boldsymbol{K}}^k \stackrel{L^p(\Omega) }{-\!\!\!\rightharpoonup } \widehat{\boldsymbol{K}}\,.
\end{equation}
Let us show that the  sequence  $\,( \overline{\boldsymbol{R}}^k)_{k\geq 1}$ also admits a convergent subsequence. Using relations of the type \eqref{e6} and \eqref{e10} we derive
$$\|\overline{\boldsymbol{K}}\,\|^2\,\geq\,\|\boldsymbol{\Gamma}\,\|^2\,=\,\dfrac{1}{2}\,\|\,\text{Grad}\, \overline{\boldsymbol{R}}\,\|^2\,\geq\,\dfrac{1}{2}\,\|\,  \overline{\boldsymbol{R}},_i\,\|^2\qquad\text{for}\quad i=1,2,3.$$
If we write this inequality for $\,\overline{\boldsymbol{K}}^k$, $\overline{\boldsymbol{R}}^k$ and integrate over $\,\Omega\,$ we get
\begin{equation*}
    \int_\Omega \|\,\overline{\boldsymbol{K}}^k\, \|^p\, \mathrm{d}V \, \geq\,  \Big(\dfrac{1}{2}\Big)^{p/2}\int_\Omega \|\,\overline{\boldsymbol{R}}_{,i}^k \,\|^p\, \mathrm{d}V,\qquad\text{i.e.}
\end{equation*}
\begin{equation}\label{e40}
 \|\,\overline{\boldsymbol{R}}_{,i}^k \,\|_{L^p(\Omega)}\,\leq\, \sqrt{2}\,\, \|\,\overline{\boldsymbol{K}}^k \,\|_{L^p(\Omega)}\qquad\text{for}\quad i=1,2,3.
\end{equation}
From \eqref{e38} and \eqref{e40} we deduce that $\big(\overline{\boldsymbol{R}}_{,i}^k\big)_{k\geq 1}$ is bounded in $\,L^p(\Omega,\mathbb{R}^{3\times 3})\,$ and since $\,\|\,\overline{\boldsymbol{R}}^k\, \|^2=3\,$, it follows that $\big(\overline{\boldsymbol{R}}_{,i}^k\big)_{k\geq 1}$ is a bounded sequence in $\,W^{1,p}(\Omega,\mathbb{R}^{3\times 3})$. Consequently, there exists an element $\widehat{\boldsymbol{R}}\in W^{1,p}(\Omega,\mathbb{R}^{3\times 3})$ such that (on a subsequence, not relabeled)
\begin{equation}\label{e41}
    \overline{\boldsymbol{R}}^k \stackrel{W^{1,p}(\Omega) }{-\!\!\!\rightharpoonup } \widehat{\boldsymbol{R}}\qquad\text{and}\qquad  \overline{\boldsymbol{R}}^k \stackrel{L^p(\Omega) }{\longrightarrow } \widehat{\boldsymbol{R}}\,\,.
\end{equation}
In view of the mentioned properties of the spaces defined in \eqref{e20,1} and \eqref{e20,2}, we have moreover that $\,\widehat{\boldsymbol{R}}\in W^{1,p}\big(\Omega,\mathrm{SO}(3)\big)$.
\smallskip

As the next step, we want to show that the limits
$\,\widehat{\boldsymbol{\varphi}}$, $\widehat{\boldsymbol{R}}$, $\widehat{\boldsymbol{E}}$ and $\,\widehat{\boldsymbol{K}}$ defined by   \eqref{e35}, \eqref{e37}, \eqref{e39} and \eqref{e41}  are connected through the relations
\begin{equation}\label{e44}
    \widehat{\boldsymbol{E}}\, = \widehat{\boldsymbol{R}}{}^T\,\text{Grad}\, \widehat{\boldsymbol{\varphi}}- \id_3\,\,,
    \qquad
    \widehat{\boldsymbol{K}}=\widehat{\boldsymbol{R}}{}^T\,\mathrm{Curl}\, \widehat{\boldsymbol{R}}\,.
\end{equation}
We shall use the following known result about the (weak) convergence in Lebesgue spaces (see, e.g. \cite{Dacorogna04}, p. 24), written in the tensorial case:
\medskip

\textbf{Proposition.} \emph{Let $\,p\in[1,+\infty)$, let $\,q\,$ be the conjugate exponent to $\,p\,$ $\big(\frac{1}{p}\, +\,\frac{1}{q}\,=1\big)$ and let the sequences $\,(\boldsymbol{U}_k)_{k\geq 1}\subset L^p(\Omega,\mathbb{R}^{3\times 3})\,$, $\,(\boldsymbol{V}_k)_{k\geq 1}\subset L^q(\Omega,\mathbb{R}^{3\times 3})\,$ such that
$$ \boldsymbol{U}_k \stackrel{L^p(\Omega) }{-\!\!\!\rightharpoonup } \boldsymbol{U}\qquad\text{and}\qquad  \boldsymbol{V}_k \stackrel{L^q(\Omega) }{\longrightarrow } \boldsymbol{V}\,\,.$$
Then, it follows $\,\,\boldsymbol{V}_k\boldsymbol{U}_k \stackrel{L^1(\Omega) }{-\!\!\!\rightharpoonup } \boldsymbol{V}\boldsymbol{U}\,$.}\medskip

To prove \eqref{e44}, let us consider the sequence $\big( \overline{\boldsymbol{R}}^{k,T}\,\text{Grad}\, \boldsymbol{\varphi}^k\big)_{k\geq 1}\subset
L^2(\Omega,\mathbb{R}^{3\times 3})\,$. In view of \eqref{e35}$_1$ we have $ \, \text{Grad}\, \boldsymbol{\varphi}^k   \stackrel{L^2(\Omega) }{-\!\!\!\rightharpoonup }    \text{Grad}\,\widehat{ \boldsymbol{\varphi}}\,$. Further, from \eqref{e41}$_2$ and $\,p\geq 2\,$ we deduce that $\, \overline{\boldsymbol{R}}^k \stackrel{L^2(\Omega) }{\longrightarrow } \widehat{\boldsymbol{R}}\,$ and using the above Proposition we find
\begin{equation}\label{e45}
    \overline{\boldsymbol{R}}^{k,T}\,\text{Grad}\, \boldsymbol{\varphi}^k  \,\stackrel{L^1(\Omega) }{-\!\!\!\rightharpoonup } \, \widehat{\boldsymbol{R}}{}^T\,  \text{Grad}\,\widehat{ \boldsymbol{\varphi}}\,\,,\qquad\text{i.e.}\qquad  \overline{\boldsymbol{E}}^k \,\stackrel{L^1(\Omega) }{-\!\!\!\rightharpoonup }\, \widehat{\boldsymbol{E}}\,.
\end{equation}
Comparing the last relation with \eqref{e37} we see that \eqref{e44}$_1$ holds true. To show \eqref{e44}$_2$ we observe that $\,\overline{\boldsymbol{R}}_{,i}^k \stackrel{L^p(\Omega) }{-\!\!\!\rightharpoonup }
\widehat{\boldsymbol{R}}_{,i}\,$ for $i=1,2,3$, by virtue of \eqref{e41}$_1\,$. Then, we have \begin{equation}\label{e46}
    \overline{\boldsymbol{R}}_{,i}^k\times \boldsymbol{e}_i \stackrel{L^p(\Omega) }{-\!\!\!\rightharpoonup }
\widehat{\boldsymbol{R}}_{,i} \times \boldsymbol{e}_i\,\,,\qquad\text{i.e.}\qquad \text{Curl}\,\overline{\boldsymbol{R}}^k \stackrel{L^p(\Omega) }{-\!\!\!\rightharpoonup }
\text{Curl}\,\widehat{\boldsymbol{R}}\,.
\end{equation}
On the other hand, from \eqref{e41}$_2$ and $\,q=\dfrac{p}{p-1}\,\leq p\,$ it follows that
$\, \overline{\boldsymbol{R}}^k \stackrel{L^q(\Omega) }{\longrightarrow } \widehat{\boldsymbol{R}}$. We can apply now the above Proposition to obtain
\begin{equation}\label{e47}
    \overline{\boldsymbol{R}}^{k,T}\,\text{Curl}\,\overline{\boldsymbol{R}}^k \,\stackrel{L^1(\Omega) }{-\!\!\!\rightharpoonup }  \, \widehat{\boldsymbol{R}}{}^T\,
\text{Curl}\,\widehat{\boldsymbol{R}} \,,\qquad\text{i.e.}\qquad   \overline{\boldsymbol{K}}^k \, \stackrel{L^1(\Omega) }{-\!\!\!\rightharpoonup } \,\,  \widehat{\boldsymbol{R}}{}^T\,
\text{Curl}\,\widehat{\boldsymbol{R}}\,.
\end{equation}
By comparison of \eqref{e39} and \eqref{e47}$_2$ we deduce that $\,\widehat{\boldsymbol{K}}=\widehat{\boldsymbol{R}}{}^T\,
\text{Curl}\,\widehat{\boldsymbol{R}}\,$ and thus the relations \eqref{e44} are proved. \smallskip

We want to show next that $\,(\widehat{ \boldsymbol{\varphi}}, \widehat{\boldsymbol{R}})\,$ is a minimizer of the energy functional $I$. To this aim, we prove first that
\begin{equation}\label{e48}
    W(\overline{\boldsymbol{E}},\overline{\boldsymbol{K}}) \quad\text{is convex in}\quad(\overline{\boldsymbol{E}},\overline{\boldsymbol{K}}).
\end{equation}
Indeed, $\,W_{\mathrm{mp}}(\overline{\boldsymbol{E}} )$ is convex in $\, \overline{\boldsymbol{E}} \,$, since $\,W_{\mathrm{mp}}(\overline{\boldsymbol{E}} )$ is a quadratic form in $\,E_{ij}$   and the Hessian matrix is positive definite, cf. \eqref{e14}$_2$ and \eqref{e17}. Analogously, the function $\,f(\overline{\boldsymbol{K}})\,$ defined by \eqref{e11,5} is convex in $\,\overline{\boldsymbol{K}}$, since it is quadratic and its Hessian matrix is positive definite (by virtue of $\,a_1 , a_2, a_3>0$). We can write the curvature energy density $\, W_{\mathrm{curv}}( \overline{\boldsymbol{K}})$ in \eqref{e14}$_3$ as the composition
$$ W_{\mathrm{curv}}( \overline{\boldsymbol{K}})\,=\,\mu\,L_c^p\,\,g\big(B(\overline{\boldsymbol{K}})\big),\qquad\text{
with }\quad g(t):=t^{p/2},\quad g:[0,+\infty)\rightarrow\mathbb{R}.$$
Since $\,g\,$ is a convex and increasing function and $B$ is convex, we see that $\, W_{\mathrm{curv}}( \overline{\boldsymbol{K}})\,$  is also a convex function of $\,\overline{\boldsymbol{K}}\,$. In view of the decomposition \eqref{e14}$_1\,$, the assertion \eqref{e48} holds true.

By virtue of the convergences \eqref{e37}, \eqref{e39} and the convexity property \eqref{e48} we obtain
\begin{equation}\label{e49}
    \int_\Omega W (\widehat{\boldsymbol{E}},\widehat{\boldsymbol{K}} )\, \mathrm{d}V \, \leq\,
    \displaystyle\liminf_{k\rightarrow\infty}
    \int_\Omega W (\overline{\boldsymbol{E}}^k,\overline{\boldsymbol{K}}^k )\, \mathrm{d}V\,.
\end{equation}
For the potential of external loads we notice that
\begin{equation}\label{e50}
    \displaystyle\lim_{k\rightarrow\infty} \Pi_f(\boldsymbol{\varphi}^k)= \Pi_f(\widehat{\boldsymbol{\varphi}}),\qquad
     \displaystyle\lim_{k\rightarrow\infty}  \Pi_N(\boldsymbol{\varphi}^k)= \Pi_N (\widehat{\boldsymbol{\varphi}}),
\end{equation}
\begin{equation}\label{e51}
      \displaystyle\lim_{k\rightarrow\infty} \Pi_M( \overline{\boldsymbol{R}}^k)= \Pi_M( \widehat{\boldsymbol{R}}),\qquad
     \displaystyle\lim_{k\rightarrow\infty}  \Pi_{M_c}( \overline{\boldsymbol{R}}^k)= \Pi_{M_c}( \widehat{\boldsymbol{R}}).
\end{equation}
Indeed, the relations \eqref{e50} are a consequence of the assumptions \eqref{e22} and the convergence \eqref{e35}. Similarly, the relations \eqref{e51} follow from the convergence \eqref{e41} and the assumed continuity of the operators $\,\Pi_M$ and $  \Pi_{M_c}\,$. The relations \eqref{e50} and \eqref{e51} show that
$$\displaystyle\lim_{k\rightarrow\infty} \Pi(\boldsymbol{\varphi}^k, \overline{\boldsymbol{R}}^k)=\, \Pi(\widehat{\boldsymbol{\varphi}}, \widehat{\boldsymbol{R}})$$
and from \eqref{e19}, \eqref{e49} we deduce
\begin{equation}\label{e52}
    I(\widehat{\boldsymbol{\varphi}}, \widehat{\boldsymbol{R}}) \,\, \leq\,  \, \displaystyle\liminf_{k\rightarrow\infty}\, I(\boldsymbol{\varphi}^k, \overline{\boldsymbol{R}}^k)\,.
\end{equation}
Taking into account \eqref{e34} and \eqref{e52} we find
\begin{equation}\label{e53}
    I(\widehat{\boldsymbol{\varphi}}, \widehat{\boldsymbol{R}}) \,\, \leq\,  \, \displaystyle\inf_{(\boldsymbol{\varphi}, \overline{\boldsymbol{R}})\in\mathcal{A}}\, I(\boldsymbol{\varphi}, \overline{\boldsymbol{R}})\,.
\end{equation}
Finally, we verify that $\,(\widehat{\boldsymbol{\varphi}}, \widehat{\boldsymbol{R}}) \in\mathcal{A}\,$ by means of the definition  \eqref{e21}. We have already seen in \eqref{e35}, \eqref{e41} that $\,(\widehat{\boldsymbol{\varphi}}, \widehat{\boldsymbol{R}}) \in H^1(\Omega,\mathbb{R}^{3})\times W^{1,p}\big(\Omega,\mathrm{SO}(3)\big)$. In order to check the Dirichlet boundary conditions, we use the embedding theorem on the boundary (see, e.g. \cite{Alt06}, Theorem A6.13, p. 272):
$$\text{if }\quad u_k \stackrel{W^{1,p}(\Omega) }{-\!\!\!\rightharpoonup } u\,,\qquad\text{then }\quad u_k \stackrel{L^{p}(\partial\Omega) }{\longrightarrow } u\,,$$
written in the vectorial or tensorial case.

Thus, from \eqref{e35}$_1$ we see that $\,\boldsymbol{\varphi}^k \stackrel{L^2(\Gamma_d) }{\longrightarrow } \widehat{\boldsymbol{\varphi}}\,$. But we have $\,{\boldsymbol{\varphi}^k}_{\big|\Gamma_d}= \boldsymbol{\varphi}_d\,$ for any $\,k\geq 1\,$ and hence,  $\,\widehat{\boldsymbol{\varphi}}_{\big|\Gamma_d}= \boldsymbol{\varphi}_d\,$ must hold. Analogously, from \eqref{e41}$_1$ it follows that $\, \overline{\boldsymbol{R}}^k \stackrel{L^p(\Gamma_d) }{\longrightarrow } \widehat{\boldsymbol{R}}\,$ and using
${\overline{\boldsymbol{R}}^k}_{\big|\Gamma_d}= \boldsymbol{R}_d\,$
we deduce that
$\widehat{\boldsymbol{R}}_{\big|\Gamma_d}= \boldsymbol{R}_d\,$.

Therefore, $\,(\widehat{\boldsymbol{\varphi}}, \widehat{\boldsymbol{R}})\,$ is indeed an element of the admissible set $\,\mathcal{A}\,$ and the relation \eqref{e53} can be rewritten as
\begin{equation}
     I(\widehat{\boldsymbol{\varphi}}, \widehat{\boldsymbol{R}}) \,\, =\,  \, \displaystyle\min_{(\boldsymbol{\varphi}, \overline{\boldsymbol{R}})\in\mathcal{A}}\, I(\boldsymbol{\varphi}, \overline{\boldsymbol{R}})\,,
\end{equation}
i.e. $\,(\widehat{\boldsymbol{\varphi}}, \widehat{\boldsymbol{R}})\,$ is a minimizer of the energy functional $\,I\,$ over $\,\mathcal{A}\,$. The proof is complete.
\end{proof}

\begin{remark}\label{rem1}
In view of the relationship between the wryness tensor $\,\boldsymbol{\Gamma}\,$ and the dislocation density tensor $\,\overline{\boldsymbol{K}}$, described by \eqref{e9}, \eqref{e10} and \eqref{e13,5},  one can  express the curvature energy density alternatively as a function of $\,\boldsymbol{\Gamma}\,$ in the following way
    \begin{align}\label{e54}
      W=\widetilde{W}(\overline{\boldsymbol{E}},\boldsymbol{\Gamma})&=W_{\mathrm{mp}}(\overline{\boldsymbol{E}} )+ \widetilde{W}_{\mathrm{curv}}( \boldsymbol{\Gamma}),\qquad\mathrm{where} \vspace{4pt}\notag\\[-5pt]\\[-5pt]
       \widetilde{W}_{\mathrm{curv}}( \boldsymbol{\Gamma})&= \mu\,L_c^p\,\Big(b_1\|\,\mathrm{dev\,sym}\, \boldsymbol{\Gamma}\|^2\, +  \,b_2 \|\,\mathrm{skew}\, \boldsymbol{\Gamma}\|^2\,   + \, b_3\big(\mathrm{tr}\,\boldsymbol{\Gamma}\big)^2\Big)^{{p}/{2}}\,\,.\notag
    \end{align}
Taking into account \eqref{e13,5}, \eqref{e14}$_3$ and \eqref{e54}$_2\,$,  we see that
$$b_1=a_1\,,\qquad b_2=a_2\,,\qquad b_3=\,4\,a_3\,.$$
Then, one can formulate the minimization problem as in \eqref{e19}, \eqref{e21}, and prove in a similar manner the existence of minimizers under the same hypotheses \eqref{e15}, \eqref{e16}, \eqref{e22}, \eqref{e23}, with $\,b_1 ,\, b_2,\, b_3>0$.
\end{remark}

\begin{remark}\label{rem2}
The boundary conditions for the microrotation field $\,\overline{\boldsymbol{R}}\,$ on the Dirichlet part of the boundary $\,\Gamma_d\,$ can be relaxed or even omitted. Some alternative possible boundary conditions for the  microrotation  $\,\overline{\boldsymbol{R}}\,$ on   $\,\Gamma_d\,$ have been discussed in \cite{Neff_plate04_cmt,Neff_plate07_m3as}. Thus, we can consider (instead of \eqref{e21}) the larger admissible set $\,\widetilde{\mathcal{A}}\,$ given by
\begin{equation}\label{e55}
    \widetilde{\mathcal{A}}=\big\{\,(\boldsymbol{\varphi}, \overline{\boldsymbol{R}})\in H^1(\Omega, \mathbb{R}^3)\times W^{1,p}\big(\Omega,\mathrm{SO}(3)\big)\,\big|\,\,\, \boldsymbol{\varphi}_{\big|\Gamma_d}= \boldsymbol{\varphi}_d\,\,\big\}.
\end{equation}
In this situation, the existence theorem remains valid, in the following sense: the minimization problem \eqref{e19} and \eqref{e55} admits, under the hypotheses \eqref{e22} and \eqref{e23}$_1\,$, at least one solution $\,(\boldsymbol{\varphi}, \overline{\boldsymbol{R}})\in\widetilde{\mathcal{A}}$.
\end{remark}

\begin{remark}\label{rem3}
In this paper we have considered the case of isotropic materials. The existence results can also be generalized to the case of anisotropic elastic bodies, provided that the strain energy density $\,W(\overline{\boldsymbol{E}},\overline{\boldsymbol{K}})\,$ satisfies some appropriate coercivity and convexity conditions. The corresponding results for isotropic, orthotropic or anisotropic plates and shells of Cosserat type have been presented in \cite{Neff_plate04_cmt,Neff_plate07_m3as,Birsan-Neff-AnnRom-2012,Birsan-Neff-JElast-2013,Birsan-Neff-MMS-2014}.
\end{remark}

\section{Chiral materials}\label{sec5}

Let us exemplify our previous remark for chiral (or hemitropic) materials. A material is said to be chiral, if it cannot be superposed to its mirror image. Chiral materials are thus non-centro-symmetric due to the handedness in their microstructure. We refer the interested reader to \cite{Aero_Kuvshinski_63,Aero_Kuvshinski_64,natroshvili2005}.\\

While a material is isotropic, if it is right-invariant under the group $\mathrm{O}(3)$, chiral materials are characterized by right-invariance under $\mathrm{SO}(3)$-only. In general, the symmetry group being smaller, there exist more material constants. By straightforward extension of the linear chiral micropolar model we add three terms which couple non-symmetric strain $\overline{\boldsymbol{E}}$ and curvature $\overline{\boldsymbol{K}}$. We add for chiral materials (see \cite{Lakes1,Lakes2})
\begin{align}\label{e61}
	W_{\textrm{chiral}}(\overline{\boldsymbol{E}}\,,\,\overline{\boldsymbol{K}})=2\mu L_c\,\Big (\,&\sqrt{\beta_1}\,\langle\,\mathrm{dev\,sym}\, \overline{\boldsymbol{E}}\,,\,\mathrm{dev\,sym}\, \overline{\boldsymbol{K}}\,\rangle\notag\\
	\,+\,&\sqrt{\beta_2}\,\langle\,\mathrm{skew}\,\overline{\boldsymbol{E}}\,,\,\mathrm{skew}\,\overline{\boldsymbol{K}}\,\rangle\,+\,\sqrt{\beta_3}\,\mathrm{tr}\,\overline{\boldsymbol{E}}\,\,\mathrm{tr}\,\overline{\boldsymbol{K}}\,\Big)\,.
\end{align}

Let us consider the quadratic case $p=2$ in more detail. Then the total elastic energy reads
\begin{align}\label{eq111}
\begin{split}
	W(\overline{\boldsymbol{E}},\overline{\boldsymbol{K}})&=\mu\,\|\,\mathrm{dev\,sym}\, \overline{\boldsymbol{E}}\,\|^2\,\,+\,\mu_c\,\|\,\mathrm{skew}\, \overline{\boldsymbol{E}}\,\|^2\,+\,\frac{\kappa}{2}\big (\tr\,\overline{\boldsymbol{E}}\,\big)^2\\
	&\hspace{0.35cm}\,+2\mu L_c\,\Big (\,\sqrt{\beta_1}\,\langle\,\mathrm{dev\,sym}\, \overline{\boldsymbol{E}}\,,\,\mathrm{dev\,sym}\, \overline{\boldsymbol{K}}\,\rangle\\
	&\hspace{1.5cm}\,+\,\sqrt{\beta_2}\,\langle\,\mathrm{skew}\,\overline{\boldsymbol{E}}\,,\,\mathrm{skew}\,\overline{\boldsymbol{K}}\,\rangle\,+\,\sqrt{\beta_3}\,\mathrm{tr}\,\overline{\boldsymbol{E}}\,\,\mathrm{tr}\,\overline{\boldsymbol{K}}\,\Big)\\
	&\hspace{0.35cm}\,+\mu L_c^2\left(a_1\,\|\,\mathrm{dev\,sym}\,\overline{\boldsymbol{K}}\,\|^2\,+\,a_2\,\|\,\mathrm{skew}\,\overline{\boldsymbol{K}}\,\|^2+a_3\,\big (\tr\,\overline{\boldsymbol{K}}\,\big)^2\right)\,.
\end{split}
\end{align}
This formulation linearizes to 
\begin{align*}
\begin{split}
	W(\nabla \boldsymbol{u}, \overline{\boldsymbol{A}})&=\mu\,\|\,\mathrm{dev\,sym}\, \nabla\boldsymbol{u}\,\|^2\,\,+\,\mu_c\,\|\,\mathrm{skew}\, \big(\,\nabla\boldsymbol{u}-\overline{\boldsymbol{A}}\,\big)\,\|^2\,+\,\frac{\kappa}{2}\big (\tr\,\big(\,\nabla\boldsymbol{u}-\overline{\boldsymbol{A}}\,\big)\,\big)^2\\
	&\hspace{0.35cm}\,+2\mu L_c\,\Big (\,\sqrt{\beta_1}\,\langle\,\mathrm{dev\,sym}\, \nabla\boldsymbol{u}\,,\,\mathrm{dev\,sym}\, \mathrm{Curl}\,\overline{\boldsymbol{A}}\,\rangle\\
	&\hspace{1.5cm}\,+\,\sqrt{\beta_2}\,\langle\,\mathrm{skew}\,\nabla\boldsymbol{u}\,,\,\mathrm{skew}\,\mathrm{Curl}\,\overline{\boldsymbol{A}}\,\rangle\,+\,\sqrt{\beta_3}\,\mathrm{tr}\,\nabla\boldsymbol{u}\,\,\mathrm{tr}\,\mathrm{Curl}\,\overline{\boldsymbol{A}}\,\Big)\\
	&\hspace{0.35cm}\,+\mu L_c^2\left(a_1\,\|\,\mathrm{dev\,sym}\,\mathrm{Curl}\,\overline{\boldsymbol{A}}\,\|^2\,+\,a_2\,\|\,\mathrm{skew}\,\mathrm{Curl}\,\overline{\boldsymbol{A}}\,\|^2+a_3\,\big (\tr\,\mathrm{Curl}\,\overline{\boldsymbol{A}}\,\big)^2\right)\,,
\end{split}
\end{align*}
which is equivalent to the format of linear chiral Cosserat materials given e.\,g. in \cite{natroshvili2005}, (2.19). Note that \eqref{eq111} is the most general chiral quadratic energy in terms of $\big(\,\overline{\boldsymbol{E}},\overline{\boldsymbol{K}}\,\big)$ having nine material constants. In order to establish positive definiteness of this energy with respect to $\big(\,\overline{\boldsymbol{E}},\overline{\boldsymbol{K}}\,\big)$ we observe that the new chiral terms have no sign in general and must therefore be bounded by the already present strain and curvature contributions. A sample set of sufficient conditions can be obtained from the following considerations:
\begin{align*}
&\mu\,\|\,\mathrm{dev\,sym}\,\overline{\boldsymbol{E}}\,\|^2\,+\,\mu L_c^2a_1\,\|\,\mathrm{dev\,sym}\,\overline{\boldsymbol{K}}\,\|^2\,+\,2\mu L_c\sqrt{\beta_1}\,\langle\,\mathrm{dev\,sym}\, \overline{\boldsymbol{E}}\,,\,\mathrm{dev\,sym}\, \overline{\boldsymbol{K}}\,\rangle\\
	&\hspace*{0.25cm}\geq\,\mu\,\|\,\mathrm{dev\,sym}\,\overline{\boldsymbol{E}}\,\|^2+\mu L_c^2 a_1\,\|\,\mathrm{dev\,sym}\,\overline{\boldsymbol{K}}\,\|^2\,-\,2\mu L_c\sqrt{\beta_1}\,\|\,\mathrm{dev\,sym}\,\overline{\boldsymbol{E}}\,\|\,\|\,\mathrm{dev\,sym}\, \overline{\boldsymbol{K}}\,\|\,\\
	&\hspace{0.5cm}=\langle\,\left(\begin{array}{c}\|\,\mathrm{dev\,sym}\,\overline{\boldsymbol{E}}\,\|\\[12pt]
	\|\,\mathrm{dev\,sym}\,\overline{\boldsymbol{K}}\,\|\end{array}\right)\,,\,\underbrace{\begin{pmatrix}\displaystyle\mu & -\displaystyle\mu L_c\sqrt{\beta_1}\\[12pt]\displaystyle-\mu L_c\sqrt{\beta_1} & \displaystyle\mu L_c^2a_1\end{pmatrix}}_{=M}\left(\begin{array}{c}\|\,\mathrm{dev\,sym}\,\overline{\boldsymbol{E}}\,\|\\[12pt]
	\|\,\mathrm{dev\,sym}\,\overline{\boldsymbol{K}}\,\|\end{array}\right)\,\rangle\,.
\end{align*}
Since the matrix M is symmetric, the last term is positive if and only if 
\begin{align*}
	\mu>0\quad\text{and}\;\;\,\det(M)=\mu^2L_c^2\,\big(\,a_1-\beta_1\big)>0\qquad\Longleftrightarrow\qquad a_1>\beta_1.
\end{align*}

Proceeding similar by the other two terms we must have in addition to conditions \eqref{e15} and \eqref{e16} that
\begin{alignat}{3}\label{e62}
	a_1>\beta_1\geq 0,\qquad a_2>\frac{\mu}{\mu_c}\beta_2\geq 0,\qquad\text{and}\qquad a_3>\frac{\mu}{\kappa}\,\beta_3\geq 0
\end{alignat}
In summary, pointwise uniform positive definiteness of the energy \eqref{eq111} is obtained if and only if
\begin{alignat}{4}\label{e63}
	\mu&>0,\qquad				   &\kappa&>0,\qquad								&\mu_c&>0,\qquad 				& 	p&\geq 2,\notag\\&&&&&&&\\[-5pt]
	a_1&>\beta_1\geq 0,\;\;\qquad    &     a_2&>\frac{\mu}{\mu_c}\,\beta_2\geq 0,\;\;\qquad		&    a_3&>\frac{\mu}{\kappa}\,\beta_3\geq 0,\;\;\qquad 	&  L_c&>0,\notag
\end{alignat}
See (2.19) in \cite{natroshvili2005} for a more complicated, equivalent statement in linear chiral Cosserat models. With the set of assumptions \eqref{e63} our existence result carries over.

\section{Limitations of our approach}\label{sec6}

Another coupling between micro and continuum rotations can be envisaged. Instead of taking
\begin{align}\label{e64}
	\mu_c\,\|\,\mathrm{skew}\,\big(\,\overline{\boldsymbol{U}}-\id_3\big)\,\|^2=\mu_c\,\|\,\mathrm{skew}\,\big(\,\overline{\boldsymbol{R}}^T\boldsymbol{R}\,\boldsymbol{U}\big)\,\|^2\,,
\end{align}
which mixes rotational coupling with pure stretch $\boldsymbol{U}$, we might consider
\begin{align}\label{e65}
	\mu_c\,\|\,\overline{\boldsymbol{R}}-\boldsymbol{R}\,\|^2=\mu_c\,\|\,\overline{\boldsymbol{R}}^T\boldsymbol{R}-\id_3\,\|^2=\mu_c\,\left\|\,\overline{\boldsymbol{U}}\left(\overline{\boldsymbol{U}}^T\overline{\boldsymbol{U}}\;\right)^{-\frac{1}{2}}-\id_3\,\right\|^2\,,
\end{align}
where $\boldsymbol{R}$ is the continuum rotation (the orthogonal factor in the polar decomposition $\boldsymbol{F}=\boldsymbol{R}\,\boldsymbol{U}$ ). Both terms have the identical linearized behaviour. However, \[\boldsymbol{X}\longmapsto\left\|\,\boldsymbol{X}\left(\boldsymbol{X}^T\boldsymbol{X}\;\right)^{-\frac{1}{2}}-\id_3\,\right\|^2\] is non-convex with respect to $\boldsymbol{X}$ and our methods do not immediately carry over. Nevertheless, the coupling \eqref{e65} is used in \cite{BohmNe_14} giving rise to solitary wave solutions.\\

Similary, we could use the Riemannian distance function on $\mathrm{SO}(3)$, the square of which is given by
\begin{align}
	\mu_c\,\mathrm{dist}_{\mathrm{geod}}^2\Big(\overline{\boldsymbol{R}}^T\overline{\boldsymbol{R}}\big)=\mu_c\,\|\,\log\big(\,\overline{\boldsymbol{R}}^T\boldsymbol{R}\,\big)\,\|^2=\mu_c\,\left \|\,\log\left(\,\overline{\boldsymbol{U}}\left(\overline{\boldsymbol{U}}^T\overline{\boldsymbol{U}}\;\right)^{-\frac{1}{2}}\,\right)\,\right\|^2\,,
\end{align}
where $\log$ is the matrix-logarithm \cite{Neff_Martin}. In this case it is again the non-convexity which prevents application of our method.\\

All coupling terms have the same linearized behaviour namely
\begin{align}
	\mu_c\,\|\,\mathrm{skew}\,(\nabla\boldsymbol{u}-\overline{\boldsymbol{A}})\,\|^2\,, 
\end{align}
where $\mathrm{skew}\,\nabla u$ is the infinitesimal continuum rotation and $\overline{\boldsymbol{A}}$ is the infinitesimal microrotation.
\footnotesize{\bibliographystyle{plain}
\bibliography{literatur_Birsan}}

\appendix
\section*{Appendix A. Other curvature measures}\label{app1}

Instead of taking $\boldsymbol{\mathfrak{K}}$ one can also work with $\boldsymbol{\widetilde{\mathfrak{K}}}=\boldsymbol{\mathfrak{K}}^{\stackrel{2.3}{T}}$. This approach for example can be found in \cite{Boehm14, Boehm13}, where $ \boldsymbol{\widetilde{\mathfrak{K}}}$ is named $\boldsymbol{K}$. 
Therefore we find it expedient to provide a summary table which shows how tensors used in this paper transform.\\ 

According to definition \eqref{e2}, \eqref{e3} and \eqref{e7} we have
\begin{align}
	\boldsymbol{\mathfrak{K}}&=\overline{\boldsymbol{R}}^T\mathrm{Grad}\,\overline{\boldsymbol{R}}=(\overline{\boldsymbol{R}}^T\overline{\boldsymbol{R}},_k)\otimes\boldsymbol{e}_k=\overline{R}_{mi}\,\overline{R}_{mj,k}\,\boldsymbol{e}_i\otimes\boldsymbol{e}_j\otimes\boldsymbol{e}_k\,,\label{e65}\\
	\boldsymbol{\widetilde{\mathfrak{K}}}&=\boldsymbol{\mathfrak{K}}^{\stackrel{2.3}{T}}=\overline{R}_{mi}\,\overline{R}_{mk,j}\,\boldsymbol{e}_i\otimes\boldsymbol{e}_j\otimes\boldsymbol{e}_k\,,\label{e66}
\end{align}
and
\begin{align}\label{e67}
	\overline{\boldsymbol{K}}=\overline{\boldsymbol{R}}^T\mathrm{Curl}\,\overline{\boldsymbol{R}}=-\overline{\boldsymbol{R}}^T(\,\overline{\boldsymbol{R}},_i\times \boldsymbol{e}_i\,)=\epsilon_{jrs}\,\overline{R}_{mi}\,\overline{R}_{ms,r}\,\boldsymbol{e}_i\otimes\boldsymbol{e}_j.
\end{align}
Furthermore equation \eqref{e4} yields
\begin{align}\label{e68}
\Gamma&=\mathrm{axl}(\,\overline{\boldsymbol{R}}^T\overline{\boldsymbol{R}},_k)\otimes\boldsymbol{e}_k
	      =-\frac{1}{2}\,\epsilon_{ijk}\,\overline{R}_{mj}\,\overline{R}_{mk,t}\,\boldsymbol{e}_i\otimes\boldsymbol{e}_t
\end{align} 
where the double dot product `` : '' of two third order tensors $\boldsymbol{A}=A_{ijk}\,\boldsymbol{e}_i\otimes\boldsymbol{e}_j\otimes \boldsymbol{e}_k$ and $\boldsymbol{B}=B_{ijk}\,\boldsymbol{e}_i\otimes\boldsymbol{e}_j\otimes \boldsymbol{e}_k$
is again defined as $\,\boldsymbol{A}:\boldsymbol{B}\,=\, A_{irs}B_{rsj}\,\boldsymbol{e}_i\otimes\boldsymbol{e}_j\,$.
Then we can easily derive an relationship between the third order curvature tensor $\boldsymbol{\mathfrak{K}}$ and the dislocation density tensor $\overline{\boldsymbol{K}}$:
\begin{align}
	\boldsymbol{\mathfrak{K}}\,:\,\boldsymbol{\epsilon}&=(\,{\mathfrak{K}}_{ilm}\,\boldsymbol{e}_i\otimes\boldsymbol{e}_l\otimes\boldsymbol{e}_m\,)\,:\,(\,\epsilon_{jkn}\,\boldsymbol{e}_j\otimes\boldsymbol{e}_k\otimes\boldsymbol{e}_n\,)=\mathfrak{K}_{irs}\,\epsilon_{rsn}\,\boldsymbol{e}_i\otimes\boldsymbol{e}_n\notag\\
		&=\overline{R}_{ji}\,\overline{R}_{jr,s}\,\epsilon_{rsn}\,\boldsymbol{e}_i\otimes\boldsymbol{e}_n=\epsilon_{nrs}\,\overline{R}_{ji}\,\overline{R}_{jr,s}\,\boldsymbol{e}_i\otimes\boldsymbol{e}_n=-\overline{\boldsymbol{K}}\,,
\end{align}
which means
\begin{align}\label{e69}
	\overline{\boldsymbol{K}}=-\boldsymbol{\mathfrak{K}}:\boldsymbol{\epsilon}\,.
\end{align}
On the other hand we have
\begin{align}
	\boldsymbol{K}:\boldsymbol{\epsilon}&=\boldsymbol{\widetilde{\mathfrak{K}}}\,:\,\boldsymbol{\epsilon}=(\,{\mathfrak{K}}_{iml}\,\boldsymbol{e}_i\otimes\boldsymbol{e}_l\otimes\boldsymbol{e}_m\,)\,:\,(\,\epsilon_{jkn}\,\boldsymbol{e}_j\otimes\boldsymbol{e}_k\otimes\boldsymbol{e}_n\,)\notag\\
				  &=\mathfrak{K}_{isr}\,\epsilon_{rsn}\,\boldsymbol{e}_i\otimes\boldsymbol{e}_n=\overline{R}_{ji}\,\overline{R}_{js,r}\,\epsilon_{rsn}\,\boldsymbol{e}_i\otimes\boldsymbol{e}_n=\epsilon_{nrs}\,\overline{R}_{ji}\,\overline{R}_{js,r}\,\boldsymbol{e}_i\otimes\boldsymbol{e}_n=\overline{\boldsymbol{K}}\,,
\end{align}
such that
\begin{align}\label{e70}
	\overline{\boldsymbol{K}}=\overline{\boldsymbol{R}}^T\mathrm{Curl}\,\boldsymbol{R}=\boldsymbol{K}:\boldsymbol{\epsilon}
\end{align}
Moreover we have 
\begin{align}
	\boldsymbol{R}^T\boldsymbol{R}=\id\;&\Longrightarrow\;\partial_j\,\big(\,\boldsymbol{R}^T\boldsymbol{R}\,\big)=\big(\,\partial_j\boldsymbol{R}^T\,\big)\boldsymbol{R}+\boldsymbol{R}^T\big(\,\partial_j\boldsymbol{R}\,\big)=0\notag\\
	&\Longrightarrow\;\big(\,\partial_j\boldsymbol{R}^T\,\big)\boldsymbol{R}=-\boldsymbol{R}^T\big(\,\partial_j\boldsymbol{R}\,\big)
\end{align}
and therefore
\begin{align}
	\boldsymbol{\epsilon}\boldsymbol{\Gamma}&=\big(\,\epsilon_{ijk}\,\boldsymbol{e}_i\otimes\boldsymbol{e}_j\otimes\boldsymbol{e}_k\,\big)\left(-\frac{1}{2}\,\epsilon_{rst}\,\overline{R}_{ms}\,\overline{R}_{mt,n}\,\boldsymbol{e}_r\otimes\boldsymbol{e}_n\right)
						=-\frac{1}{2}\,\epsilon_{ijk}\,\epsilon_{kst}\,\overline{R}_{ms}\,\overline{R}_{mt,n}\,\boldsymbol{e}_i\otimes\boldsymbol{e}_j\otimes\boldsymbol{e}_n\notag\\
						&=\frac{1}{2}\,\epsilon_{ikj}\,\epsilon_{kst}\,\overline{R}_{ms}\,\overline{R}_{mt,n}\,\boldsymbol{e}_i\otimes\boldsymbol{e}_j\otimes\boldsymbol{e}_n=\frac{1}{2}\,\big(\,\delta_{it}\delta_{js}-\delta_{is}\delta_{jt}\,\big)\,\overline{R}_{ms}\,\overline{R}_{mt,n}\,\boldsymbol{e}_i\otimes\boldsymbol{e}_j\otimes\boldsymbol{e}_n\notag\\
						&=\frac{1}{2}\,\overline{R}_{mj}\,\overline{R}_{mi,n}\,\boldsymbol{e}_i\otimes\boldsymbol{e}_j\otimes\boldsymbol{e}_n-\frac{1}{2}\,\overline{R}_{mi}\,\overline{R}_{mj,n}\,\boldsymbol{e}_i\otimes\boldsymbol{e}_j\otimes\boldsymbol{e}_n\\
						&=-\frac{1}{2}\,\overline{R}_{mi}\,\overline{R}_{mj,n}\,\boldsymbol{e}_i\otimes\boldsymbol{e}_j\otimes\boldsymbol{e}_n-\frac{1}{2}\,\overline{R}_{mi}\,\overline{R}_{mj,n}\,\boldsymbol{e}_i\otimes\boldsymbol{e}_j\otimes\boldsymbol{e}_n\notag\\
						&=-\overline{R}_{mi}\,\overline{R}_{mj,n}\,\boldsymbol{e}_i\otimes\boldsymbol{e}_j\otimes\boldsymbol{e}_n=-\boldsymbol{\mathfrak{K}}\,,\notag
\end{align}
 thus
\begin{align}\label{e71}
	\boldsymbol{\mathfrak{K}}=-\boldsymbol{\epsilon}\boldsymbol{\Gamma}\,.
\end{align}
In \cite{Boehm1} the curvature is also characterized by the \emph{Cartan torsion tensor} (cf. \cite{Hehl07}) $\boldsymbol{\mathfrak{T}}=\boldsymbol{\widetilde{\mathfrak{K}}}-\boldsymbol{\mathfrak{K}}$, which is also easily connected to the dislocation density tensor through
\begin{align}\label{e72}
	\boldsymbol{\mathfrak{T}}=\boldsymbol{\widetilde{\mathfrak{K}}}-\boldsymbol{\mathfrak{K}}=\overline{\boldsymbol{K}}\boldsymbol{\epsilon}\,,
\end{align}
since
\begin{align}
	\overline{\boldsymbol{K}}\boldsymbol{\epsilon}&=\big(\,\epsilon_{jrs}\,\overline{R}_{li}\,\overline{R}_{ls,r}\,\boldsymbol{e}_i\otimes\boldsymbol{e}_j\,\big)\,\big(\,\epsilon_{kmn}\,\boldsymbol{e}_k\otimes\boldsymbol{e}_m\otimes\boldsymbol{e}_n\,\big)
					=\epsilon_{krs}\,\epsilon_{kmn}\,\overline{R}_{li}\,\overline{R}_{ls,r}\,\boldsymbol{e}_i\otimes\boldsymbol{e}_m\otimes\boldsymbol{e}_n\notag\\
					&=-\epsilon_{rks}\,\epsilon_{kmn}\,\overline{R}_{li}\,\overline{R}_{ls,r}\,\boldsymbol{e}_i\otimes\boldsymbol{e}_m\otimes\boldsymbol{e}_n
					=-\big(\,\delta_{rn}\delta_{sm}-\delta_{rm}\delta_{sn}\,\big)\,\overline{R}_{li}\,\overline{R}_{ls,r}\,\boldsymbol{e}_i\otimes\boldsymbol{e}_m\otimes\boldsymbol{e}_n\\
					&=\big(\,\delta_{rm}\delta_{sn}-\delta_{rn}\delta_{sm}\,\big)\,\overline{R}_{li}\,\overline{R}_{ls,r}\,\boldsymbol{e}_i\otimes\boldsymbol{e}_m\otimes\boldsymbol{e}_n\notag\\
					&=\overline{R}_{li}\,\overline{R}_{ln,m}\,\boldsymbol{e}_i\otimes\boldsymbol{e}_m\otimes\boldsymbol{e}_n-\overline{R}_{li}\,\overline{R}_{lm,n}\,\boldsymbol{e}_i\otimes\boldsymbol{e}_m\otimes\boldsymbol{e}_n=\boldsymbol{\widetilde{\mathfrak{K}}}-\boldsymbol{\mathfrak{K}}=\boldsymbol{\mathfrak{T}}\,.\notag
\end{align}
Then with \eqref{e72}, \eqref{e71} and \eqref{e9}$_2$ we obtain
\begin{align}\label{e73}
	\boldsymbol{K}=\boldsymbol{\widetilde{\mathfrak{K}}}=\boldsymbol{\mathfrak{T}}+\boldsymbol{\mathfrak{K}}=\overline{\boldsymbol{K}}\boldsymbol{\epsilon}-\boldsymbol{\epsilon}\boldsymbol{\Gamma}=\overline{\boldsymbol{K}}\boldsymbol{\epsilon}+\boldsymbol{\epsilon}\overline{\boldsymbol{K}}^T-\frac{1}{2}\big(\mathrm{tr}\,\overline{\boldsymbol{K}}\,\big)\boldsymbol{\epsilon}
\end{align}
Furthermore we have
\begin{align}
	\boldsymbol{\mathfrak{T}}^{\stackrel{2.3}{T}}&=\big(\,\boldsymbol{\widetilde{\mathfrak{K}}}-\boldsymbol{\mathfrak{K}}\,\big)^{\stackrel{2.3}{T}}=\boldsymbol{\widetilde{\mathfrak{K}}}^{\stackrel{2.3}{T}}-\boldsymbol{\mathfrak{K}}^{\stackrel{2.3}{T}}=\boldsymbol{\mathfrak{K}}-\boldsymbol{\widetilde{\mathfrak{K}}}\,,\\
	\boldsymbol{\mathfrak{T}}^{\stackrel{1.3}{T}}&=\big(\,\boldsymbol{\widetilde{\mathfrak{K}}}-\boldsymbol{\mathfrak{K}}\,\big)^{\stackrel{1.3}{T}}=\boldsymbol{\widetilde{\mathfrak{K}}}^{\stackrel{1.3}{T}}-\boldsymbol{\mathfrak{K}}^{\stackrel{1.3}{T}}=-\boldsymbol{\widetilde{\mathfrak{K}}}-\boldsymbol{\mathfrak{K}}^{\stackrel{1.3}{T}}\,,
\end{align}
and
\begin{align}
	\boldsymbol{\mathfrak{T}}^{\stackrel{1.2}{T}}&=\big(\,\boldsymbol{\widetilde{\mathfrak{K}}}-\boldsymbol{\mathfrak{K}}\,\big)^{\stackrel{1.2}{T}}
	=\big(\overline{R}_{mi}\,\overline{R}_{mk,j}-\overline{R}_{mi}\,\overline{R}_{mj,k}\,\big)\,\boldsymbol{e}_j\otimes\boldsymbol{e}_i\otimes\boldsymbol{e}_k\notag\\
	&=\big(-\overline{R}_{mk}\,\overline{R}_{mi,j}+\overline{R}_{mj}\,\overline{R}_{mi,k}\,\big)\,\boldsymbol{e}_j\otimes\boldsymbol{e}_i\otimes\boldsymbol{e}_k\\
	&=-\overline{R}_{mk}\,\overline{R}_{mi,j}\,\boldsymbol{e}_j\otimes\boldsymbol{e}_i\otimes\boldsymbol{e}_k+\overline{R}_{mj}\,\overline{R}_{mi,k}\,\boldsymbol{e}_j\otimes\boldsymbol{e}_i\otimes\boldsymbol{e}_k=-\boldsymbol{\mathfrak{K}}^{\stackrel{1.3}{T}}+\boldsymbol{\mathfrak{K}}\,,\notag
\end{align}
thus
\begin{align}
	\boldsymbol{\mathfrak{T}}+\boldsymbol{\mathfrak{T}}^{\stackrel{1.2}{T}}-\boldsymbol{\mathfrak{T}}^{\stackrel{1.3}{T}}=2\cdot\boldsymbol{\widetilde{\mathfrak{K}}}
\end{align}
Now
\begin{align}
	\boldsymbol{\mathfrak{T}}:\boldsymbol{\epsilon}&=\big(T_{ijk}\,\boldsymbol{e}_i\otimes\boldsymbol{e}_j\otimes\boldsymbol{e}_k\big):\big(\epsilon_{rst}\boldsymbol{e}_r\otimes\boldsymbol{e}_s\otimes\boldsymbol{e}_t\big)\notag\\
	&=T_{ijk}\,\epsilon_{jkt}\,\boldsymbol{e}_i\otimes\boldsymbol{e}_t=\epsilon_{jkt}\,\big(\overline{R}_{mi}\,\overline{R}_{mk,j}-\overline{R}_{mi}\,\overline{R}_{mj,k}\,\big)\,\boldsymbol{e}_i\otimes\boldsymbol{e}_t\\
	&=\epsilon_{tjk}\,\overline{R}_{mi}\,\overline{R}_{mk,j}\,\boldsymbol{e}_i\otimes\boldsymbol{e}_t-\big(-\epsilon_{tkj}\overline{R}_{mi}\,\overline{R}_{mj,k}\,\boldsymbol{e}_i\otimes\boldsymbol{e}_t\big)
	=\overline{\boldsymbol{K}}+\overline{\boldsymbol{K}}=2\,\overline{\boldsymbol{K}}\,.\notag
\end{align}
\begin{landscape}
In summary we have the following relationships between the second Cosserat deformation tensor $\boldsymbol{\mathfrak{K}}=\overline{\boldsymbol{R}}^T\mathrm{Grad}\,\overline{\boldsymbol{R}}$, the wryness-tensor $\boldsymbol{\Gamma}=\mathrm{axl}(\,\overline{\boldsymbol{R}}^T\overline{\boldsymbol{R}},_k)\otimes\boldsymbol{e}_k$, the contortion tensor $\boldsymbol{\widetilde{\mathfrak{K}}}= \overline{\boldsymbol{R}}^T\big(\, \mathrm{Grad}\, \boldsymbol{d }_k \big) \otimes \, \boldsymbol{e}_k $, the dislocation density tensor $\overline{\boldsymbol{K}}=\overline{\boldsymbol{R}}^T\mathrm{Curl}\,\overline{\boldsymbol{R}}$ and the Cartan torsion tensor $\boldsymbol{\mathfrak{T}}=\boldsymbol{\widetilde{\mathfrak{K}}}-\boldsymbol{\mathfrak{K}}$:\\
\begin{table}[h]
\centering
\renewcommand{\arraystretch}{1.27}
\begin{tabular}{C{1.6cm}|C{3.25cm}|C{4.15cm}|C{5.2cm}|C{2.85cm}|C{2.85cm}}
\diagbox[height=1.5cm, width=2cm]{from}{to}			
												& $\boldsymbol{\mathfrak{K}}$ 										& $\boldsymbol{\widetilde{\mathfrak{K}}}=\boldsymbol{K}$ 	& $\boldsymbol{\Gamma}$ 
												& $\overline{\boldsymbol{K}}$ 											& $\boldsymbol{\mathfrak{T}}$																	\\\hline
												&&&&&																																			\\
$\boldsymbol{\mathfrak{K}}$							& -																& $\boldsymbol{\mathfrak{K}}^{\stackrel{2.3}{T}}$		   	
												& $\big(\boldsymbol{\mathfrak{K}}:\boldsymbol{\epsilon}\,\big)^T-\displaystyle\frac{1}{2}\,\big(\mathrm{tr}\,\boldsymbol{\mathfrak{K}}:\boldsymbol{\epsilon}\,\big)\,\id_3$	
												& $-\boldsymbol{\mathfrak{K}}:\boldsymbol{\epsilon}$ 						& $\boldsymbol{\mathfrak{K}}^{\stackrel{2.3}{T}}-\boldsymbol{\mathfrak{K}}$						\\
												&&&&&																																			\\\hline
												&&&&&																																			\\
$\boldsymbol{\widetilde{\mathfrak{K}}}=\boldsymbol{K}$		& $\boldsymbol{\widetilde{\mathfrak{K}}}^{\stackrel{2.3}{T}}$					& -												
												& $\big(\boldsymbol{\widetilde{\mathfrak{K}}}^{\stackrel{2.3}{T}}\hspace*{-7pt}:\boldsymbol{\epsilon}\,\big)^T-\displaystyle\frac{1}{2}\,\big(\mathrm{tr}\,\boldsymbol{\widetilde{\mathfrak{K}}}^{\stackrel{2.3}{T}}\hspace*{-7pt}:\boldsymbol{\epsilon}\,\big)\,\id_3$		
												& $-\boldsymbol{\widetilde{\mathfrak{K}}}^{\stackrel{2.3}{T}}\hspace*{-7pt}:\boldsymbol{\epsilon}$			& $\boldsymbol{\widetilde{\mathfrak{K}}}-\boldsymbol{\widetilde{\mathfrak{K}}}^{\stackrel{2.3}{T}}$																																				\\
												&&&&&																																			\\\hline
												&&&&&																																			\\
$\boldsymbol{\Gamma}$								& $-\boldsymbol{\epsilon}\boldsymbol{\Gamma}$							
												& $-\boldsymbol{\Gamma}^T\boldsymbol{\epsilon}-\boldsymbol{\epsilon}\boldsymbol{\Gamma}+\big(\mathrm{tr}\,\boldsymbol{\Gamma}\,\big)\,\boldsymbol{\epsilon}$						
												& -						
												& $-\boldsymbol{\Gamma}^T+\big(\mathrm{tr}\,\boldsymbol{\Gamma}\,\big)\,\id_3$	& $-\boldsymbol{\Gamma}^T\boldsymbol{\epsilon}+\big(\mathrm{tr}\,\boldsymbol{\Gamma}\,\big)\,\boldsymbol{\epsilon}$																																\\
												&&&&&																																			\\\hline
												&&&&&																																			\\ 
$\overline{\boldsymbol{K}}$							& $\boldsymbol{\epsilon}\overline{\boldsymbol{K}}^T-\frac{1}{2}\big(\mathrm{tr}\,\overline{\boldsymbol{K}}\,\big)\,\boldsymbol{\epsilon}$	
												& $\overline{\boldsymbol{K}}\boldsymbol{\epsilon}+\boldsymbol{\epsilon}\overline{\boldsymbol{K}}^T-\displaystyle\frac{1}{2}\,\big(\mathrm{tr}\,\overline{\boldsymbol{K}}\,\big)\boldsymbol{\epsilon}$ 																											
												& $-\overline{\boldsymbol{K}}^T+\displaystyle\frac{1}{2}\,\big(\mathrm{tr}\,\overline{\boldsymbol{K}}\big)\,\id_3$				
												& -																& $\overline{\boldsymbol{K}}\boldsymbol{\epsilon}$											\\
												&&&&&																																			\\\hline
												&&&&&																																			\\
$\boldsymbol{\mathfrak{T}}$							& $\displaystyle\frac{1}{2}\,\big(\,\boldsymbol{\mathfrak{T}}^{\stackrel{1.2}{T}}\hspace*{-7pt}-\boldsymbol{\mathfrak{T}}-\boldsymbol{\mathfrak{T}}^{\stackrel{1.3}{T}}\,\big)$					
												& $\displaystyle\frac{1}{2}\,\big(\,\boldsymbol{\mathfrak{T}}+\boldsymbol{\mathfrak{T}}^{\stackrel{1.2}{T}}\hspace*{-7pt}-\boldsymbol{\mathfrak{T}}^{\stackrel{1.3}{T}}\,\big)$ 					
												& $-\displaystyle\frac{1}{2}\,\big(\boldsymbol{\mathfrak{T}}:\boldsymbol{\epsilon}\big)^T+\displaystyle\frac{1}{4}\,\big(\mathrm{tr}\,\boldsymbol{\mathfrak{T}}:\boldsymbol{\epsilon}\big)\,\id_3$	
												& $\displaystyle\frac{1}{2}\,\boldsymbol{\mathfrak{T}}:\boldsymbol{\epsilon}$		& -																		\\
												&&&&&											
\end{tabular}
\end{table}

Here $\boldsymbol{\mathfrak{K}}^{^{\stackrel{2.3}{T}}}$ is created by taking the transpose in the last two components of $\boldsymbol{\mathfrak{K}}$.
\end{landscape}

\end{document}